\documentclass[
]{ceurart}

\usepackage{mathtools}
\usepackage{amsthm}
\usepackage{tikz}
\usetikzlibrary{arrows,fit,positioning}

\usepackage{forest}
\tikzset{
	treenode/.style = {align=center, inner sep=0pt, text centered},
	Ske/.style = {treenode, ellipse, double, draw=black,
		minimum width=6pt, thick},
	PIA/.style = {treenode, ellipse, black, draw=black,
		minimum width=6pt},
	Crit/.style = {treenode, rectangle, draw=black,
		minimum width=0.5em, minimum height=0.5em}
}
\usetikzlibrary{patterns}
\usepackage{bussproofs}
\EnableBpAbbreviations

\newtheorem{thm}{Theorem}[section]
\newtheorem{theorem}{Theorem}[section]
\newtheorem{corollary}[thm]{Corollary}
\newtheorem{lemma}[thm]{Lemma}
\newtheorem{prop}[thm]{Proposition}
\newtheorem{proposition}[thm]{Proposition}

\newtheorem{notation}[thm]{Notation}

\newtheorem{example}[thm]{Example}
\newtheorem{definition}[thm]{Definition}

\newcommand{\FH}{\hat{f}}

\newcommand{\GC}{\check{g}}

\newcommand{\FHS}{\hat{f}^{\,\sharp}}
\newcommand{\FCS}{\check{f}^{\,\sharp}}
\newcommand{\GHF}{\hat{g}^{\,\flat}}
\newcommand{\GCF}{\check{g}^{\,\flat}}

\newcommand{\aatop}{\ensuremath{\top}\xspace}
\newcommand{\abot}{\ensuremath{\bot}\xspace}
\newcommand{\aand}{\ensuremath{\wedge}\xspace}
\newcommand{\aor}{\ensuremath{\vee}\xspace}

\newcommand{\NEG}{\ensuremath{\:\check{\neg}}\xspace}

\newcommand{\ABOT}{\ensuremath{\check{\bot}}\xspace}

\newcommand{\wdia}{\ensuremath{\Diamond}\xspace}

\newcommand{\WDIA}{\ensuremath{\hat{\Diamond}}\xspace}
\newcommand{\BBOX}{\ensuremath{\check{\blacksquare}}\xspace}

\renewcommand{\epsilon}{\varepsilon}

\newcommand{\vp}{\overline{p}}

\newcommand{\ophi}{\overline{\phi}}
\newcommand{\opsi}{\overline{\psi}}

\newcommand{\bba}{\mathbb{A}}
\newcommand{\bbA}{\mathbb{A}}
\newcommand{\bbL}{\mathbb{L}}

\newcommand{\AATOP}{\hat{\top}}

\newcommand{\marginnote}[1]{\marginpar{\raggedright\tiny{#1}}}

\tikzset{
	treenode/.style = {align=center, inner sep=0pt, text centered},
	Ske/.style = {treenode, ellipse, double, draw=black,
		minimum width=6pt, thick},
	PIA/.style = {treenode, ellipse, black, draw=black,
		minimum width=6pt},
	Crit/.style = {treenode, rectangle, draw=black,
		minimum width=0.5em, minimum height=0.5em}
}

\def\fCenter{{\mbox{$\ \vdash\ $}}}

\EnableBpAbbreviations

\newcommand{\fns}{\footnotesize}

\newcommand{\commment}[1]{}
\numberwithin{equation}{section}

\newcommand{\cceq}{\coloneqq}

\newcommand{\OSI}{\overline{\Sigma}}

\newcommand{\OXI}{\overline{\Xi}}
\newcommand{\ol}[1]{\overline{#1}}
\newcommand{\vd}{\ \,\textcolor{black}{\vdash}\ \,}

\usepackage{listings}
\begin{document}

\copyrightyear{2022}
\copyrightclause{Copyright for this paper by its authors.
  Use permitted under Creative Commons License Attribution 4.0
  International (CC BY 4.0).}

\conference{CI-BD-SOQE 2026: Workshop on Craig Interpolation, Beth Definability, and Second-Order Quantifier Elimination, Lisbon, Portugal, 24–25 July 2026}

\title{Modular constructive Lyndon interpolation for nondistributive logics}

\author[1]{Andrea {De Domenico}}[%
orcid=0000-0002-8973-7011,
email=andrea.domenico@imdea.org,
]
\cormark[1]
\fnmark[1]
\address[1]{IMDEA Software Institute, Madrid, Spain}

\author[2]{Giuseppe Greco}[%
orcid=0000-0002-4845-3821,
email=g.greco@vu.nl,
]
\fnmark[1]
\address[2]{Vrije Universiteit Amsterdam, The Netherlands}

\author[2,3]{Alessandra Palmigiano}[%
orcid=0000-0001-9656-7527,
email=alessandra.palmigiano@vu.nl,
]
\fnmark[1]
\address[3]{Department of Mathematics and Applied Mathematics, University of Johannesburg, South Africa}

\author[4]{Apostolos Tzimoulis}[%
orcid=0000-0002-6228-4198,
email=apostolos@tzimoulis.eu,
]
\fnmark[1]
\address[4]{Department of Computer Science, University of Luxembourg}

\cortext[1]{Corresponding author.}
\fntext[1]{These authors contributed equally.}

\begin{abstract}
We establish the Lyndon interpolation property for basic lattice expansion logics (LE-logics) in arbitrary signatures using display calculi. Our approach is {\em constructive}, yielding interpolants algorithmically from derivations, and {\em modular}, in the sense that interpolation for axiomatic extensions can be obtained by verifying a local interpolation property for the analytic structural rules corresponding to the additional axioms. To this end, we identify a class of \emph{interpolation-safe} structural rules preserving local Lyndon interpolation. As applications of the general framework, we show that the tense version of Holliday's fundamental modal logic enjoys the Lyndon interpolation property.
\end{abstract}

\begin{keywords}
  Craig interpolation \sep
  Lyndon interpolation \sep
  Non classical logics \sep
  Lattice expansion logics \sep
  Display calculi
\end{keywords}

\maketitle

\section{Introduction}

In its classical form, the {\em Craig interpolation property} states that whenever a formula $A \to B$ is derivable in a logic, there exists a formula $C$, called an {\em interpolant}, such that both $A \to C$ and $C \to B$ are derivable, and moreover $C$ contains only the non-logical symbols common to $A$ and $B$. More generally, in abstract formulations of interpolation, implication is replaced by the derivability relation of the logic, while the formulas $A$ and $B$ may be replaced by contexts or bunches of formulas, either in the language of the logic itself or in a suitable extension thereof. The interpolant $C$, however, remains a formula of the original language of the logic, possibly occurring within a non-empty context.  The stronger {\em Lyndon interpolation property} additionally requires preservation of the {\em polarity} of propositional variables. Interpolation has deep connections with algebra, model theory, proof theory, and category theory, as well as important applications in computer science, including modular specification, formal verification, model checking, and automated reasoning. In algebraic logic, interpolation is closely related to amalgamation and superamalgamation properties, while explicitly constructing interpolants usually requires the use of analytic calculi with good structural properties.

The study of interpolation in non-classical logics has produced a broad and technically diverse literature. Surveys such as \cite{DAgostino2008} and \cite{fussner2025interpolationnonclassicallogics} document the variety of semantic, algebraic, and proof-theoretic techniques developed for various modal and substructural logics. Among proof-theoretic approaches, a particularly fruitful line of research investigates interpolation via analytic calculi from which interpolants can be extracted algorithmically \cite{GJK2025interpolationprooftheory,Lyon2020}. 

 The results  of the present paper are closest to  Kuznets' modular approach to interpolation based on {\em labelled sequent calculi} \cite{Kuznets16}, which adapts results obtained for a class of {\em internal} sequent-like formalisms \cite{kuznets2015interpolation}. In this line of research, interpolants are constructed inductively from derivations in analytic labelled calculi, yielding constructive proofs of Craig and Lyndon interpolation. One of the key features of this method is its modularity: interpolation for axiomatic extensions can be established by verifying local conditions on the additional rules corresponding to frame conditions. In particular, Kuznets identified a broad class of rules for which interpolation follows uniformly: {\em Horn-like geometric rules}, which equivalently correspond to frame conditions that can be captured by quantifier-free Horn formulas. This programme has significantly expanded the scope of proof-theoretic interpolation methods for substructural modal logics.

The present paper develops this modular and constructive perspective in the setting of lattice expansion logics ({\em LE-logics} \cite{conradie2019algorithmic, conradie2020non}) and their proper display calculi \cite{LEframes}. A characteristic feature of  LE-logics is that the distributive laws of conjunction and disjunction are not necessarily valid, while the settings of most existing proof-theoretic results on interpolation assume the validity of distributive laws. This nondistributive setting has recently attracted increasing attention \cite{almeida2026superamalgamationmodallatticesnondistributive,Holliday2023,Holliday2024}.

Proper display calculi provide a particularly natural framework for this investigation,  because they enjoy fundamental properties such as the subformula property and cut elimination in a modular way \cite{LEframes}, and moreover a large class of axiomatic extensions of the basic LE-logics can  be endowed with analytic calculi by algorithmically generating  the analytic structural rules corresponding to the additional axioms \cite{ucaaptt, ChnGrePalTzi21}. This proof-theoretic environment makes it possible to study interpolation through purely proof-theoretic criteria on structural rules, in a way analogous to the role played by frame rules in Kuznets' approach to labelled calculi.

The main contribution of the present paper is a general constructive proof of Lyndon interpolation for basic LE-logics using display calculi, together with a modular extension method for axiomatic extensions. More precisely, we show that the display calculus $\mathbf{D.LE}$ for a basic LE-logic in any given but arbitrary signature enjoys a {\em local Maehara interpolation property}, from which effective interpolation follows. We then identify a class of structural rules, which we call \emph{interpolation-safe rules}, preserving this property. These rules play a role analogous to the Horn-like geometric rules in Kuznets' framework, but are formulated intrinsically in the setting of display calculi. As a consequence, interpolation for many axiomatic extensions of LE-logics can be established uniformly by checking local syntactic conditions on the added structural rules.

Our framework applies in particular to several recently introduced lattice-based modal logics. As illustrations, we obtain constructive Lyndon interpolation results for fundamental logic and its tense modal expansions introduced in \cite{Holliday2023,Holliday2024}. These logics arise from the study of non-distributive lattice semantics motivated both by proof theory and by applications to modal reasoning and natural language semantics.

\paragraph{Structure of the paper.} In Section \ref{sec:prelim}, we recall the necessary background on LE-logics and display calculi. In Section \ref{subsec:interpolation_proof_LE}, we provide the formal definitions of the various interpolation properties and establish the local Maehara-Lyndon interpolation property for the basic calculus $\mathbf{D.LE}$. In Section \ref{subsec: interpolation_axioms_LE}, we show how to extend the strategy to some axiomatic extensions of our base logic and showcase some examples. We conclude in Section \ref{sec:conclusions-interpolation} with some directions for further work.

\section{Preliminaries} \label{sec:prelim}

\subsection{LE-logics and their algebraic semantics}
\label{ssec: LE-logics}

An {\em order-type} over $n\in \mathbb{N}$ is an $n$-tuple $\varepsilon\in \{1, \partial\}^n$. For every order type $\varepsilon$, we denote its {\em opposite} order type by $\varepsilon^\partial$, that is, $\varepsilon^\partial_i = 1$ iff $\varepsilon_i=\partial$ for every $1 \leq i \leq n$. 
	
The language $\mathcal{L}_\mathrm{LE}(\mathcal{F}, \mathcal{G})$ (from now on abbreviated as $\mathcal{L}_\mathrm{LE}$) takes as parameters a denumerable set of proposition letters $\mathsf{AtProp}$, elements of which are denoted $p,q,r$, possibly with indexes, and disjoint sets of connectives $\mathcal{F}$ and $\mathcal{G}$.
Each $f\in \mathcal{F}$ (resp.~$g\in \mathcal{G}$) has arity $n_f\in \mathbb{N}$ (resp.\ $n_g\in \mathbb{N}$) and is associated with some order-type $\varepsilon_f$ over $n_f$ (resp.\ $\varepsilon_g$ over $n_g$). The terms (formulas) of $\mathcal{L}_\mathrm{LE}$ are defined recursively as follows: 

\begin{center}
$\varphi \cceq p \mid \bot \mid \top \mid \varphi \wedge \varphi \mid \varphi \vee \varphi \mid f(\varphi_1, \ldots, \varphi_{n_f}) \mid g(\varphi_1, \ldots, \varphi_{n_g})$
\end{center}

\noindent where $p \in \mathsf{AtProp}$, $f \in \mathcal{F}$, $g \in \mathcal{G}$. Terms in $\mathcal{L}_\mathrm{LE}$ will be denoted either by $s,t$, or by lowercase Greek letters such as $\varphi, \psi, \gamma$ etc. 
In the remainder of the paper, when it is clear from the context,
we will often simplify notation and write e.g.\  $n$ for $n_f$ and $\varepsilon_i$ for $\varepsilon_{f,i}$. We also extend the $\{1,\partial\}$-notation to the symbols $\vee,\wedge,\bot,\top,\le,\vdash$ by defining 

\begin{center}
$\vee^\partial=\wedge,\qquad \wedge^\partial=\vee,\qquad \bot^\partial=\top,\qquad \top^\partial=\bot,\qquad {\le^\partial}={\ge},\qquad \varphi\ {\vdash^\partial}\ \psi=\psi\ {\vdash}\ \varphi
$
\end{center}

\noindent while superscript $^1$ denotes the identity map. Therefore, in what follows, we will sometimes write e.g.~$\vee^{\varepsilon_i}$ to denote $\vee$ when $\varepsilon_i = 1$ and  $\wedge$ when $\varepsilon_i = \partial$. In what follows, for every $k \in \mathcal{F} \cup \mathcal{G}$ we use $k(\opsi)[\varphi]_i$ to indicate that the formula $\varphi$ occurs in the $i$-th coordinate of the vector $\opsi$.

For any  $\mathcal{L}_\mathrm{LE} = \mathcal{L}_\mathrm{LE}(\mathcal{F}, \mathcal{G})$, the {\em basic}, or {\em minimal} $\mathcal{L}_\mathrm{LE}$-{\em logic} $\mathbf{L}_\mathrm{LE}$ is a set of sequents $\varphi\vdash\psi$, with $\varphi,\psi\in\mathcal{L}_\mathrm{LE}$, which contains  the following sequents as axioms: 

\begin{center}
$\bot\vdash p, \ \ \ p\vdash p, \ \ \ p\vdash \top, \ \ \ p\vdash p \vee q, \ \ \ q\vdash p\vee q, \ \ \ p\wedge q\vdash p, \ \ \ p\wedge q \vdash q,$
\end{center}

\begin{center}
$f(\vp)[\bot^{\varepsilon_i}]_i \vdash \bot,  \qquad f(\vp)[q\vee^{\varepsilon_i} r]_i \vdash f(\vp)[q]_i \vee f(\vp)[r]_i,$
\end{center}

\begin{center}
$\top \vdash g(\vp)[\top^{\varepsilon_i}]_i, \qquad g(\vp)[q]_i \wedge g(\vp)[r]_i \vdash g(\vp)[q\wedge^{\varepsilon_i} r]_i,$
\end{center}

\noindent and is closed under the following inference rules (recall that $\varphi\vdash^\partial\psi$ means $\psi\vdash\varphi$):

\begin{center}
\begin{tabular}{@{}c@{}}
\AXC{$\varphi \vdash \chi$}
\AXC{$\chi \vdash \psi$}
\BIC{$\varphi \vdash \psi$}
\DP
 \ 
\AXC{$\varphi \vdash \psi$}
\UIC{$\varphi(\chi/p) \vdash \psi(\chi/p)$}
\DP
 \ 
\AXC{$\chi \vdash \varphi$}
\AXC{$\chi \vdash \psi$}
\BIC{$\chi \vdash \varphi \wedge \psi$}
\DP
 \ 
\AXC{$\varphi \vdash \chi$}
\AXC{$\psi \vdash \chi$}
\BIC{$\varphi \vee \psi \vdash \chi$}
\DP
 \\
 
\rule[0mm]{0mm}{8mm}\AXC{$\varphi \vdash^{\varepsilon_{f,i}} \psi$}
\UIC{$f(\vp)[\varphi]_i \vdash f(\vp)[\psi]_i$}
\DP

\qquad\qquad

\AXC{$\varphi \vdash^{\varepsilon_{g,i}} \psi$}
\UIC{$g(\vp)[\varphi]_i \vdash g(\vp)[\psi]_i$}
\DP
 \\
\end{tabular}
\end{center}


In a basic language $\mathcal{L}_{\mathrm{LE}}(\mathcal{F},\mathcal{G})$, the elements of $\mathcal{F}\cup\mathcal{G}$ are usually mutually independent. However, in some cases,  some   connectives in $\mathcal{F}\cup\mathcal{G}$ are one another's residuals (or (dual) Galois-residuals) in some coordinates.\footnote{In what follows, we will typically not make a difference between co-variant and contravariant residuation, and refer to any of these as `residuals'.} In particular, given $f\in\mathcal{F}$ or $g\in\mathcal{G}$ we  have $f^\sharp_i\in\mathcal{G}$ if $\varepsilon_{f,i} = 1$ or $g^\flat_i\in\mathcal{F}$ if $\varepsilon_{g,i} = 1$, and $f^\sharp_i\in\mathcal{F}$ if $\varepsilon_{f,i} = \partial$ or $g^\flat_i\in\mathcal{G}$ if $\varepsilon_{g,i} = \partial$, the order-type of which are (i) $\varepsilon_{f_i^\sharp,i} = \varepsilon_{f,i}$ and $\varepsilon_{f_i^\sharp,j} = (\varepsilon_{f,j})^{\varepsilon_{f,i}^\partial}$ for any $j\neq i$, and (ii) $\varepsilon_{g_i^\flat,i} = \varepsilon_{g,i}$ and $\varepsilon_{g_i^\flat,j} = (\varepsilon_{g,j})^{\varepsilon_{g,i}^\partial}$ for any $j\neq i$. For instance, if $f$ and $g$ are binary connectives such that $\varepsilon_f = (1, \partial)$ and $\varepsilon_g = (\partial, 1)$, then $\varepsilon_{f^\sharp_1} = (1, 1)$, $\varepsilon_{f^\sharp_2} = (1, \partial)$, $\varepsilon_{g^\flat_1} = (\partial, 1)$ and $\varepsilon_{g^\flat_2} = (1, 1)$.\footnote{Note that this notation depends on the connective which is taken as primitive, and needs to be carefully adapted to well known cases. For instance, consider the  `fusion' connective $\circ$ (which, when denoted  as $f$, is such that $\varepsilon_f = (1, 1)$). Its residuals
$f_1^\sharp$ and $f_2^\sharp$ are commonly denoted $/$ and
$\backslash$ respectively. However, if $\backslash$ is taken as the primitive connective $g$, then $g_2^\flat$ is $\circ = f$, and
$g_1^\flat(x_1, x_2): = x_2/x_1 = f_1^\sharp (x_2, x_1)$. This example shows
that, when identifying $g_1^\flat$ and $f_1^\sharp$, the conventional order of the coordinates is not preserved, and depends on which connective
is taken as primitive.} The intended interpretation of the $n_f$-ary connective $f^\sharp_i$ is  
the right residual of $f\in\mathcal{F}$ in its $i$th coordinate if $\varepsilon_{f,i} = 1$ (resp.\ its dual Galois-residual if $\varepsilon_{f,i} = \partial$). The intended interpretation of the  $n_g$-ary connective $g^\flat_i$ is the left residual of $g\in\mathcal{G}$ in its $i$th coordinate if $\varepsilon_{g,i} = 1$ (resp.\ its Galois-residual if $\varepsilon_{g,i} = \partial$). 
%
%
If so, $\mathbf{L}_{\mathrm{LE}}$ is augmented with the invertible rules 

\begin{center}
{\centerline{
\begin{tabular}{cc}
\AXC{$f(\ophi)[\varphi]_i \fCenter \psi$}
\doubleLine
\UIC{$\varphi \fCenter^{\varepsilon_{f,i}} f^\sharp_i(\ophi)[\psi]_i$}
\DP
\qquad & \qquad
\AXC{$\varphi \fCenter g(\ophi)[\psi]_i$}
\doubleLine
\UIC{$g^\flat_i(\ophi)[\varphi]_i \fCenter^{\varepsilon_{g,i}} \psi$}
\DP \\
\end{tabular}
}}
\end{center}


Any language $\mathcal{L}_\mathrm{LE} = \mathcal{L}_\mathrm{LE}(\mathcal{F}, \mathcal{G})$ can be associated with the language $\mathcal{L}_\mathrm{LE}^* = \mathcal{L}_\mathrm{LE}(\mathcal{F}^*, \mathcal{G}^*)$, where $\mathcal{F}^*\supseteq \mathcal{F}$ and $\mathcal{G}^*\supseteq \mathcal{G}$ are obtained by expanding $\mathcal{L}_\mathrm{LE}$ with the residuals of each connective in each coordinate. Then, the logic $\mathbf{L}_{\mathrm{LE}}$ can be expanded to the minimal fully residuated $\mathcal{L}_\mathrm{LE}^\ast$-logic $\mathbf{L}_\mathrm{LE}^*$, by adding the corresponding residuation rules.  
The logic $\mathbf{L}_\mathrm{LE}^*$ is a conservative extension of $\mathbf{L}_\mathrm{LE}$ (cf.~\cite[Theorem 2.4]{ChnGrePalTzi21}), i.e.~every $\mathcal{L}_\mathrm{LE}$-sequent $\varphi\vdash\psi$ is derivable in $\mathbf{L}_\mathrm{LE}$ iff $\varphi\vdash\psi$ is derivable in $\mathbf{L}_\mathrm{LE}^*$.

\begin{example} \label{ex:1}
    Fundamental modal logic \cite{Holliday2024} is an LE-logic in the signature $\mathcal{F} \coloneqq \{\Diamond\}$, $\mathcal{G} \coloneqq \{\neg, \Box\}$, where $n_\neg = n_\Box = n_\Diamond = 1$,  $\varepsilon_\Box = \varepsilon_\Diamond = 1$ and $\varepsilon_\neg = \partial$. It is obtained by adding the following sequents and rules to $\mathbf{L}_\mathrm{LE}$.
    \begin{center}
    $p \wedge \neg p \vdash \bot$ \quad \AXC{$\varphi \vdash \neg \psi$}\UIC{$\psi \vdash \neg \varphi$}\DP \quad $\Diamond \neg p \vdash \neg \Box p$
    \end{center}
    In this case, $\mathcal{F}^* = \{\Diamond, \blacklozenge\}$ and $\mathcal{G}^* = \{\neg, \Box, \blacksquare\}$, where $\blacklozenge$ is the left residual of $\Box$ and $\blacksquare$ is the right residual of $\Diamond$. Notice that $\neg$ is its own Galois-residual. In the following, we write \emph{tense fundamental modal logic} to denote Holliday's fundamental modal logic augmented with the residuals of the modal connectives.
\end{example}

\subsection{Proper display calculi for basic normal LE-logics} \label{ssec:display}

In this section, we recall the definition of
the proper display calculus $\mathbf{D.LE}$ for the basic normal $\mathcal{L}_{\mathrm{LE}}$-logic for a fixed but arbitrary LE-signature $\mathcal{L} = \mathcal{L}_{\mathrm{LE}}(\mathcal{F}, \mathcal{G})$  (cf.~Section \ref{ssec: LE-logics}). 
Our presentation is a more streamlined version of the one introduced in \cite{ucaaptt} for DLE-logics and then straightforwardly generalized to LE-logics in \cite{ChnGrePalTzi21}. 
 
 Let $S_{\mathcal{F}}: = \{\FH \mid f\in \mathcal{F}^*\}$ and $S_{\mathcal{G}}: = \{\GC \mid g\in \mathcal{G}^*\}$ be the sets of structural connectives indexed by  $\mathcal{F}^*$ and $ \mathcal{G}^*$ respectively. Each structural connective comes with an arity and an order-type which coincides with those of its associated operational connective in $ \mathcal{F}^*$ and $\mathcal{G}^*$.
The calculus $\mathbf{D.LE}$ manipulates sequents $\Pi \fCenter \Sigma$, where 
$\Pi$ and $\Sigma$ are structures of two sorts which are built from formulas and are defined by the following simultaneous recursion:

\begin{center}
$\mathsf{Str}_{\mathcal{F}} \ni \Pi  \cceq A \mid \AATOP \mid \FH\, (\OXI) \ \ \ \ \ \ \ \ \ \ \mathsf{Str}_{\mathcal{G}} \ni \Sigma \cceq A \mid \ABOT \mid \GC\, (\OXI)$
\end{center}

\noindent where $A$ is an $\mathcal{L}_{\mathrm{LE}}$-formula, $\FH \in S_{\mathcal{F}}$, $\GC \in S_{\mathcal{G}}$, and $\OXI\in \mathsf{Str}_{\mathcal{F}}^{\varepsilon_f}$ (resp.~$\OXI\in \mathsf{Str}_{\mathcal{G}}^{\varepsilon_g}$), 
 where $\mathsf{Str}_{\mathcal{F}}^{\varepsilon_f} = \mathsf{Str}_{\mathcal{F}}^{\varepsilon_{f,1}}\times\cdots\times \mathsf{Str}_{\mathcal{F}}^{\varepsilon_{f,n_f}}$ where $\mathsf{Str}_{\mathcal{F}}^\partial=\mathsf{Str}_{\mathcal{G}}$, and dually for $\mathsf{Str}_{\mathcal{G}}^{\varepsilon_g}$. In what follows, for every $o \in S_{\mathcal{F}} \cup S_{\mathcal{G}}$ we use $o(\OXI)[\Pi]_i$ (resp.~$o(\OXI)[\Sigma]_i$) to indicate that the structure $\Pi$ (resp.~$\Sigma$) occurs in the $i$-th coordinate of the vector $\OXI$.

If $S_1$ and $S_2$ are sequents, the  double-horizontal-line notation 
{\fns
\AXC{$S_1$}
\LL{$r$}
\doubleLine
\UIC{$S_2$}
\DP}
is an abbreviation for the rules {\fns  
\AXC{$S_1$}
\LL{$r$}
\UIC{$S_2$}
\DP} 
and 
{\fns 
\AXC{$S_2$}
\RL{$r^{-1}$}
\UIC{$S_1$}
\DP}.

\begin{itemize}
\item Identity and Cut rules:
\end{itemize}

\begin{center}
\begin{tabular}{rl}
\AXC{\phantom{$\Gamma \fCenter \varphi$}}
\LL{\fns Id$_p$}
\UI$p \fCenter p$
\DP
 & 
\AX$\Pi \fCenter A$
\AX$A \fCenter \Sigma$
\RL{\fns Cut$_A$}
\BI$\Pi \fCenter \Sigma$
\DP
 \\
\end{tabular}
\end{center}

\begin{itemize}		
\item Display rules for $f\in \mathcal{F}$ and $g\in \mathcal{G}$: for any $1\leq k \leq n_f$ and $1\leq \ell \leq n_g$,\footnote{The notation $\FH \dashv \FCS_k$ (resp.~$\GHF_{\ell} \dashv \GC$) indicates that $\FH$ and $\FCS_k$ (resp.~$\GHF_{\ell}$ and $\GC$) are in a {\em residuated pair} and $\FCS_k$ (resp.~$\GHF_{\ell}$) is the right residual (resp.~left residual) of $\FH$ (resp.~$\GC$) in the $k$-th coordinate (resp.~$\ell$-th coordinate). The notation $(\GC, \GCF_{\ell})$ (resp.~$(\FH, \FHS_k)$) indicates that $\GC$ and $\GCF_{\ell}$ (resp.~$\FH$ and $\FHS_k$) are in a {\em Galois connection} (resp.~{\em dual Galois connection}) and $\GCF_{\ell}$ (resp.~$\FHS_k$) is the right residual (resp.~left residual) of $\GC$ (resp.~$\FH$) in the $\ell$-th coordinate (resp.~$k$-th coordinate).} 
\end{itemize}

If $\varepsilon_{f,k} = 1$ and $\varepsilon_{g,\ell} = 1$

\begin{center}
\begin{tabular}{rl}
 \\
\AX$\FH\, (\OXI)[\Pi]_k \fCenter \Sigma$
\doubleLine
\LL{\fns $\FH \dashv \FCS_k$}
\UI$\Pi \fCenter \FCS_k\, (\OXI)[\Sigma]_k$
\DP
 &  
\AX$\Pi \fCenter \GC\, (\OXI)[\Sigma]_{\ell}$
\doubleLine
\RL{\fns $\GHF_{\ell} \dashv \GC$}
\UI$\GHF_{\ell}\, (\OXI)[\Pi]_{\ell} \fCenter \Sigma$
\DP 
\end{tabular}
\end{center}

If $\varepsilon_{f,k} = \partial$ and $\varepsilon_{g,\ell} = \partial$

\begin{center}
\begin{tabular}{rl}
\AX$\FH\, (\OXI)[\Sigma]_k \fCenter \Sigma'$
\doubleLine
\LL{\fns $(\FH, \FHS_k)$}
\UI$\FHS_k\, (\OXI)[\Sigma']_k \fCenter \Sigma$
\DP
 & 
\AX$\Pi' \fCenter \GC\, (\OXI)[\Pi]_{\ell}$
\doubleLine
\RL{\fns $(\GC, \GCF_{\ell})$}
\UI$ \Pi \fCenter \GCF_j\, (\OXI)[\Pi']_{\ell}$
\DP
\\
\end{tabular}
\end{center}



\begin{itemize}
\item Logical introduction rules for lattice connectives, for $i \in \{1,2\}$:
\end{itemize}

\begin{center}
\begin{tabular}{c}
\AX$\AATOP \fCenter \Sigma$
\LL{\fns $\aatop_L$}
\UI$\aatop \fCenter \Sigma$
\DP
 \
\AXC{$\phantom{\Sigma}$}
\RL{\fns $\aatop_R$}
\UI$\Pi\fCenter \aatop$
\DP
 \ \ \ 
\AX$A_i \fCenter \Sigma$
\LL{\fns $\aand_{Li}$}
\UI$A_1 \aand A_2 \fCenter \Sigma$
\DP
 \ 
\AX$\Pi \fCenter A$
\AX$\Pi \fCenter B$
\RL{\fns $\aand_R$}
\BI$\Pi \fCenter A \,\aand\, B$
\DP 
 \\
    
 \\
\AXC{$\phantom{\Pi}$}
\LL{\fns $\abot_L$}
\UI$\abot \fCenter \Sigma$
\DP
 \ 
\AX$\Pi \fCenter \ABOT$
\RL{\fns $\abot_R$}
\UI$\Pi \fCenter \abot$
\DP
 \ \ \ 
\AX$A \fCenter \Sigma$
\AX$B \fCenter \Sigma$
\LL{\fns $\aor_L$}
\BI$A \,\aor\, B \fCenter \Sigma$
\DP
 \ 
\AX$\Pi \fCenter A_i$
\RL{\fns $\aor_{Ri}$}
\UI$\Pi \fCenter A_1 \aor A_2$
\DP
 \\
\end{tabular}
\end{center}

\begin{itemize}
\item Logical introduction rules for $f\in\mathcal{F}$ and $g\in\mathcal{G}$: 
\end{itemize}
\begin{center}
\begin{tabular}{rlrl}
\bottomAlignProof
\AX$\FH\, (\overline{A}) \fCenter \Sigma$
\LL{\fns$f_L$}
\UI$f(\overline{A}) \fCenter \Sigma$
\DP
 &
\bottomAlignProof
\AxiomC{$\Big(\Xi_i \fCenter^{\!\!\varepsilon_{f,i}}\, A_i \Big)_{1\leq i\leq n_f}$}
\RL{\fns$f_R$}
\UI$\FH\, (\OXI)\fCenter f(\overline{A})$
\DP
 &
\bottomAlignProof
\AxiomC{$\Big(A_i \fCenter^{\!\!\varepsilon_{{g,i}}}\; \Xi_i \Big)_{1\leq i\leq n_g}$}
\LL{\fns$g_L$}
\UI$g(\overline{A}) \fCenter \GC\, (\OXI)$
\DP
 &
\bottomAlignProof
\AX$\Pi \fCenter \GC\, (\overline{A})$
\RL{\fns$g_R$}
\UI$\Pi \fCenter g(\overline{A})$
\DP
 \\
\end{tabular}
\end{center}
\noindent where $\OXI = (\Xi_1,\ldots, \Xi_{n_f})$ in $f_R$ and $\OXI = (\Xi_1,\ldots, \Xi_{n_g})$ in $g_L$.	
                In particular, if $f$ and $g$ are $0$-ary (i.e.~they are constants), the rules $f_R$ and $g_L$ above reduce to the axioms ($0$-ary rules) $\FH \fCenter f$ and $g \fCenter \GC$.

	The calculus $\mathbf{D.LE}$ is sound w.r.t.~the class of complete $\mathcal{L}$-algebras (cf.~\cite[Theorem 12]{ucaaptt} and \cite[Theorem 2.8]{LEframes}).
	 Moreover, $\mathbf{D.LE}$ is a proper display calculus (cf.~\cite[Theorem 26]{ucaaptt} and \cite[Appendix A]{LEframes}), and hence cut elimination holds for it as a consequence of a Belnap-style cut elimination metatheorem (cf.~\cite[Section 2.2 and Appendix A]{ucaaptt} and \cite[Theorem A.7]{LEframes}).

\paragraph{Metastructures and structural rules.} In the presentation of the language of $\mathbf{D.LE}$, $\Sigma_i$, $\Pi_i$, and $\Xi_i$ are metavariables for structures. To formally present analytic structural rules, we need to make use of metastructures, i.e., structures that are constructed by structural metavariables. Let $\mathsf{MVar} = \mathsf{MVar}_{\mathcal{F}}\uplus \mathsf{MVar}_{\mathcal{G}}$ be the denumerable set of {\em metavariables} of sorts $\Pi_1, \Pi_2, \ldots\in \mathsf{MVar}_{\mathcal{F}}$ and $\Sigma_1, \Sigma_2, \ldots \in \mathsf{MVar}_{\mathcal{G}}$.
The sets $\mathsf{MStr}_{\mathcal{F}}$ and $\mathsf{MStr}_{\mathcal{G}}$ of the $\mathcal{F}$- and $\mathcal{G}$-{\em metastructures} are defined by simultaneous induction as follows:

\begin{center}
\begin{tabular}{c}
$\mathsf{MStr}_{\mathcal{F}} \ni \Phi \ \cceq \  \Pi \mid \FH(\overline{\Phi})$\quad \quad \quad
$\mathsf{MStr}_{\mathcal{G}} \ni \Psi \ \cceq \  \Sigma \mid \GC(\overline{\Psi})$
\end{tabular}
\end{center}
where $\Pi\in \mathsf{MVar}_{\mathcal{F}}$, $\Sigma\in \mathsf{MVar}_{\mathcal{G}}$, $\FH \in \mathcal{F}^\ast$ and $\GC \in \mathcal{G}^\ast$ and $\overline{\Phi} \in \mathsf{MStr}_{\mathcal{F}}^{\varepsilon_f}$,  and $\overline{\Psi} \in \mathsf{MStr}_{\mathcal{G}}^{\varepsilon_g}$, and for any order-type $\varepsilon$ on $n$, we let $\mathsf{MStr}_{\mathcal{F}}^{\varepsilon} \coloneqq \prod_{i = 1}^{n}\mathsf{MStr}_{\mathcal{F}}^{\varepsilon_i}$ and $\mathsf{MStr}_{\mathcal{G}}^{\varepsilon} \coloneqq \prod_{i = 1}^{n}\mathsf{MStr}_{\mathcal{G}}^{\varepsilon_i}$, where for all $1 \leq i \leq n$,
\begin{center}
\begin{tabular}{ll}
$\mathsf{MStr}_{\mathcal{F}}^{\varepsilon_i} = \begin{cases} 
\mathsf{MStr}_{\mathcal{F}} &\mbox{ if } \varepsilon_i = 1\\
\mathsf{MStr}_{\mathcal{G}} &\mbox{ if } \varepsilon_i = \partial
\end{cases}\quad$
&
$\mathsf{MStr}_{\mathcal{G}}^{\varepsilon_i} = \begin{cases}
\mathsf{MStr}_{\mathcal{G}}& \mbox{ if } \varepsilon_i = 1,\\
\mathsf{MStr}_{\mathcal{F}} & \mbox{ if } \varepsilon_i = \partial.
\end{cases}$
\end{tabular}
\end{center}

\begin{definition}\label{def:AnalyticStructuralRule}
An {\em analytic structural rule} is  a rule of the form 
\begin{center}
\AXC{$(\Phi_1\fCenter \Psi_1)(\overline{\Upsilon}_1)$}
\AXC{$\cdots$}
\AXC{$(\Phi_n\fCenter \Psi_n)(\overline{\Upsilon}_n)$}
\TIC{$(\Phi_0 \fCenter \Psi_0)(\overline{\Upsilon}_0)$}
\DP
\end{center}
where $\overline{\Upsilon}_i$ with $0 \leq i \leq n$ is the set of metavariables occurring in each sequent $\Phi_i \fCenter \Psi_i$, $\overline{\Upsilon}_0 \supseteq \overline{\Upsilon}_1 \cup \ldots \cup \overline{\Upsilon}_n$, and while structural metavariables might occur multiple times in the premises they occur only once in the conclusion. 
An instance of the rule $R$ is obtained from $R$ by uniformly substituting each structural metavariable $\Pi\in \mathsf{MVar}_{\mathcal{F}}$ with an element of $\mathsf{Str}_{\mathcal{F}}$ and every structural metavariable $\Sigma\in \mathsf{MVar}_{\mathcal{G}}$ with an element of $\mathsf{Str}_{\mathcal{G}}$. A calculus contains the analytic rule $R$ if it contains every instance of $R$.
\end{definition}


The expressivity of proper display calculi has been characterized in terms of the  {\em analytic inductive} axioms (cf.~\cite[Definition 55]{ucaaptt}),  a proper subclass of the class of axioms to which the state of the art algorithmic correspondence theory via second-order quantifier elimination applies:

\begin{prop}(cf.~\cite[Proposition 59, Proposition 61]{ucaaptt})
	\label{prop:type5} Every analytic inductive LE-inequality can be equivalently transformed, via an ALBA-reduction, into a set of analytic structural rules. Conversely, every analytical structural rule is equivalent to an inductive LE-inequality.
\end{prop}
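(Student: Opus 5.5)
The plan is to follow the two directions of the ALBA-based correspondence from \cite{ucaaptt}. These are cited results, so the work lies in tracking the shape of the output rather than inventing anything new.

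\emph{From axioms to rules.} Start from an analytic inductive inequality $\varphi\le\psi$ in the fully residuated language $\mathcal{L}_\mathrm{LE}^*$. Run ALBA: apply first approximation, introducing nominal and conominal variables $\mathbf{i}\le\varphi$ and $\psi\le\mathbf{m}$. Then use the adjunction and residuation rules to unravel the skeleton nodes, that is, the positive $\mathcal{F}$-nodes on the left and the $\mathcal{G}$-nodes on the right. These become structural connectives $\FH$ and $\GC$, with fresh nominals and conominals attached to the critical branches. Analyticity means every critical branch consists solely of skeleton nodes, and the PIA parts contain no critical occurrences. Because of this, the minimal valuations computed by the Ackermann lemma can be substituted without producing non-skeleton structure.

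Next, eliminate all propositional variables. Interpret nominals as metavariables in $\mathsf{MVar}_{\mathcal{F}}$ and conominals as metavariables in $\mathsf{MVar}_{\mathcal{G}}$. Interpret the residual inequalities $\mathbf{j}\le\alpha$ and $\beta\le\mathbf{n}$, where $\alpha,\beta$ are now skeleton-shaped, as premises $\Phi_i\vdash\Psi_i$ with metastructures built from $\FH,\GC$.

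The main obstacle is the linearity condition: each metavariable must occur exactly once in the conclusion. I would secure it as follows. Apply the ALBA output in its "non-pure" form before the final Ackermann step. Rename each occurrence of a nominal in the conclusion with a fresh metavariable, and record the identifications as extra premises of the form $\Pi\vdash\Pi'$ (via display rules). Soundness of this renaming follows from the fact that the conclusion is monotone (antitone) in each skeleton position.

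\emph{From rules to inequalities.} Conversely, given an analytic structural rule, read each metastructure as a term in $\mathcal{L}_\mathrm{LE}^*$, with metavariables standing for nominals and conominals. The rule is then a quasi-inequality: the premises $\Phi_i\le\Psi_i$ imply $\Phi_0\le\Psi_0$. Use display postulates to isolate each metavariable in the conclusion. Then apply the Ackermann lemma in reverse to eliminate the premises, replacing each nominal by the join of its minimal valuation. This yields a pure inequality whose shape, reading ALBA steps backwards, is inductive. The verification that each step is an equivalence on perfect $\mathcal{L}$-algebras rests on the invertibility of every ALBA rule, which I would assume from \cite{ucaaptt}.
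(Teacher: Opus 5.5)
The paper gives no argument for this proposition: it imports it from \cite{ucaaptt} (Propositions 59 and 61; see \cite{ChnGrePalTzi21} for LE-signatures). Deferring to ALBA is therefore in line with the paper, but the reconstruction you add does not work as written.

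\textbf{Skeleton nodes.} These are not unravelled by adjunction or residuation. Those rules act on PIA nodes, to solve for critical occurrences, and analyticity guarantees there are none inside PIA subterms. Nor can you approximate a skeleton node $+f$ after first approximation to $\mathbf{i}\le\varphi$ in the nondistributive setting. Nominals are completely join-irreducible but not join-prime, so $\mathbf{i}\le f(\chi)$ does not yield a nominal $\mathbf{j}\le\chi$ with $\mathbf{i}\le f(\mathbf{j})$. What works is to approximate the skeleton of the inequality as a whole: $\varphi'(\overline{\gamma})\le\psi'(\overline{\delta})$ iff $\forall\overline{\mathbf{i}}\,\forall\overline{\mathbf{m}}\,(\overline{\mathbf{i}}\le\overline{\gamma}\ \&\ \overline{\delta}\le\overline{\mathbf{m}}\Rightarrow\varphi'(\overline{\mathbf{i}})\le\psi'(\overline{\mathbf{m}}))$. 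This holds because skeleton connectives are complete (dual) operators and (co)nominals are join- (meet-)dense.

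\textbf{Linearity.} The approximation above attaches a fresh (co)nominal to every leaf occurrence of the skeleton. The Ackermann step only rewrites the premises, since by analyticity each critical occurrence is a skeleton leaf. So the conclusion is linear for free. Your fix would fail in any case: $\Pi\vdash\Pi'$ with both metavariables in $\mathsf{MVar}_{\mathcal{F}}$ is not a well-formed $\mathbf{D.LE}$ sequent, and display postulates never change the sort of a metavariable.

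\textbf{What is actually missing.} Two points carry the weight of this direction.
(a) Lattice connectives have no structural counterpart in $\mathbf{D.LE}$. The $\Delta$-adjoint skeleton nodes must be surfaced and the inequality split, which is why the output is a \emph{set} of rules. Likewise, the $\wedge,\vee$ of PIA parts and the joins of nominals (meets of conominals) produced by minimal valuations must be distributed out through the PIA connectives, so that each premise splits into finitely many lattice-free sequents.
(b) Replacing (co)nominals by metavariables ranging over arbitrary structures needs an argument. The conclusion is linear and built from complete (dual) operators, and by the sort discipline every premise is antitone in $\mathcal{F}$-sorted and monotone in $\mathcal{G}$-sorted metavariables. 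Hence the rule restricted to completely join-/meet-irreducibles is equivalent to the unrestricted one.

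\textbf{The converse.} Your sketch does not establish it. Reversing Ackermann does not replace nominals by minimal valuations. It introduces, for each nominal $\mathbf{j}$ of the conclusion, a fresh propositional variable $p\ge\mathbf{j}$ substituted for $\mathbf{j}$ in the premises, and dually $q\le\mathbf{m}$ for conominals; this is sound by the monotonicity in (b). Reverse approximation then removes the (co)nominals from the linear conclusion. What you obtain is still a quasi-inequality in propositional variables. The whole content of the converse lies in absorbing its premises into a single inequality, and your sketch gives no mechanism for this. Display postulates alone cannot put a premise into solved form when, for instance, every metavariable occurs in it more than once, as in $\hat{h}(X,X)\vdash\check{k}(Y,Y)$. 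Saying that the result is inductive ``by reading ALBA backwards'' presupposes exactly this step. Note also that the target is an inequality in propositional variables, not a pure one. For this half you must rely on the cited result, as the paper does.
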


\subsection{Auxiliary notations and lemmas}
In this section, we gather some notation and facts that will be useful throughout the paper.

\begin{definition} \label{def:display_equivalence}
    The $\mathbf{D.LE}$-sequents $\Pi_1 \vdash \Sigma_1$ and  $\Pi_2 \vdash \Sigma_2$ are \emph{display equivalent} (notation: $\Pi_1 \vdash \Sigma_1 \equiv_D \Pi_2 \vdash \Sigma_2$) if $\Pi_1 \vdash \Sigma_1$ can be derived from $\Pi_2 \vdash \Sigma_2$ using only display postulates.
\end{definition}

It immediately follows from the definition that $\equiv_D$ is an equivalence relation. 

When a structure is denoted with a capital letter, e.g.\ $X,U,\Pi,\ldots$, we will denote with the lower-case counterpart $x,u,\pi,\ldots$ the formula that arises by substituting the structural connectives of the structure with their operational counterparts. 

We will call a structure $X$ a substructure of the sequent $\Pi\vdash\Sigma$ if $X$ is a substructure of $\Pi$ or a substructure of $\Sigma$, and denote this relationship by $(\Pi\vdash\Sigma)[X]$. In this notation, $X$ is understood to always refer to a concrete instance of the substructure $X$ in the sequent $\Pi\vdash\Sigma$. We will write $(\Pi\vdash\Sigma)[Y/X]$ to denote the substitution of the instance of the substructure $X$ in $\Pi\vdash\Sigma$ with the structure $Y$. 

The {\em display property} states that for every substructure $X$ of $\Pi\vdash\Sigma$, there exists a display equivalent sequent either of the form $X\vdash\Sigma'$ or 
of the form $\Pi'\vdash X$, such that for any $\mathcal{F}$-structure $\Gamma$ (resp.\ $\mathcal{G}$-structure $\Delta$) the sequent $\Gamma\vdash\Sigma'$ (resp.\ $\Pi'\vdash\Delta$) is display equivalent to $(\Pi\vdash\Sigma)[\Gamma/X]$ (resp.\ $(\Pi\vdash\Sigma)[\Delta/X]$). 

Given $(\Pi\vdash\Sigma)[X]$, we define $\varepsilon^{\Pi\vdash\Sigma}_X=1$ if $\Pi\vdash\Sigma$ is display equivalent to $X\vdash\Sigma'$ and $\varepsilon^{\Pi\vdash\Sigma}_X=\partial$ if $\Pi\vdash\Sigma$ is display equivalent to $\Pi'\vdash X$. In this notation, we will often omit the superscript when it is clear from the context.

The following lemma shows the inverse direction of the display property:
\begin{lemma}\label{lem:four_possible_decompositions}
   If $\Pi \vdash \Sigma $ and $\Pi' \vdash \Sigma'$ are display-equivalent, then one of the following cases holds.
   \begin{enumerate}
       \item  The structure $\Pi'$ is a substructure of $\Pi\vdash\Sigma$ and for every $\mathcal{F}$-structure $\Gamma$, $\Gamma\vdash \Sigma'$ is display equivalent to $\Pi\vdash\Sigma [\Gamma/\Pi']$.
       \item The structure $\Sigma'$ is a substructure of $\Pi\vdash\Sigma$ and for every $\mathcal{G}$-structure $\Delta$, $\Pi'\vdash \Delta$ is display equivalent to $\Pi\vdash\Sigma [\Delta/\Sigma']$. 
       \end{enumerate}
   \end{lemma}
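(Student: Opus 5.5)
We argue by induction on the length of a derivation of $\Pi'\vdash\Sigma'$ from $\Pi\vdash\Sigma$ that uses only display postulates. By Definition~\ref{def:display_equivalence} such a derivation exists, and since each display postulate is invertible we may read it as a chain $\Pi\vdash\Sigma = S_0, S_1,\ldots,S_m = \Pi'\vdash\Sigma'$, where each step is a single application of a display rule (or its inverse). We strengthen the statement so that it is invariant along the chain. For every $j$, the two ``halves'' of $S_j$ are substructure occurrences of $S_0$. For each half $Z$ of $S_j$ we also record a context such that replacing that occurrence in $S_0$ by an arbitrary structure $\Gamma$ of the same sort yields a sequent display equivalent to $S_j$ with $Z$ replaced by $\Gamma$. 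The halves of $S_j$ need not both be substructures of $S_0$: for instance, $\FCS_k(\OXI)[\Sigma]_k$ is new. So the invariant should be stated for at least one half, and the choice of which half is what produces the two cases of the lemma.

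The base case $m=0$ is immediate: $\Pi'=\Pi$ is a substructure of $\Pi\vdash\Sigma$, and $(\Pi\vdash\Sigma)[\Gamma/\Pi] = \Gamma\vdash\Sigma$, so case 1 holds. For the inductive step, suppose the claim holds for $S_{m-1}=\Pi''\vdash\Sigma''$; say case 1 holds, so $\Pi''$ is an occurrence in $S_0$ and $\Gamma\vdash\Sigma''\equiv_D (\Pi\vdash\Sigma)[\Gamma/\Pi'']$. Now inspect the last display rule, which has one of the four shapes listed in Section~\ref{ssec:display}.

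If the rule moves material out of $\Pi''$, for instance from $\FH(\OXI)[\Pi_0]_k\vdash\Sigma''$ to $\Pi_0\vdash\FCS_k(\OXI)[\Sigma'']_k$, then the new antecedent $\Pi_0$ is a substructure of $\Pi''$, hence of $S_0$. For any $\Gamma$ we then have
\[
\Gamma\vdash\FCS_k(\OXI)[\Sigma'']_k \;\equiv_D\; \FH(\OXI)[\Gamma]_k\vdash\Sigma'' \;\equiv_D\; (\Pi\vdash\Sigma)[\FH(\OXI)[\Gamma]_k/\Pi''] = (\Pi\vdash\Sigma)[\Gamma/\Pi_0],
\]
where the first step applies the same display rule to the instance with $\Gamma$ and the second uses the inductive hypothesis with $\FH(\OXI)[\Gamma]_k$ in place of $\Gamma$. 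This establishes case 1. The $\partial$-variant of the same move, $(\FH,\FHS_k)$, displays a $\mathcal{G}$-structure $\Sigma_0$ taken from inside $\Pi''$ into consequent position, and yields case 2 by the same computation.

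If instead the rule builds a new structure on the side of $\Pi''$, going the other way, we swap roles. In the resulting sequent the antecedent is no longer a substructure of $S_0$, but the consequent came from inside $\Sigma''$. So before each step we must choose which half to track: the half that the rule decomposes. The needed invariant is therefore that \emph{both} halves of $S_j$ admit the substitution property relative to a fixed ``display'' of $S_0$.

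This is where I expect the main obstacle. Whenever a half has been newly built by a display rule, it is not a literal substructure of $S_0$, so the two-case invariant cannot be maintained for both halves simultaneously. The fix I would try first is to prove the stronger statement that every display step keeps at least one half, namely the one not built up by that step, as an occurrence of $S_0$ with the substitution property. This works because every display rule decomposes exactly one side and builds the other. By induction, the decomposed side always comes from a side that satisfied the property, since a built side at step $j$ is decomposed again only by the inverse rule, and that returns the earlier occurrence. Making this precise requires a careful bookkeeping of occurrences, tracking for each structural connective occurrence in $S_j$ whether it originates in $S_0$. I would formalise this as a labelling of nodes and check it case by case for the four rule shapes.
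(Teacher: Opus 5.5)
There is a genuine gap, and it sits exactly where you expected the obstacle: the inductive step in which the last display postulate decomposes the half that was \emph{built} by an earlier step. Your justification, that a built side ``is decomposed again only by the inverse rule'', fails once some connective has arity at least $2$. The top connective of the built half has display postulates in each of its coordinates. For instance, take $f$ binary with $\varepsilon_f=(1,1)$ (fusion) and the chain
\[
\Pi_0\vdash\FCS_1(\Sigma,\Xi)\qquad\qquad \FH(\Pi_0,\Xi)\vdash\Sigma\qquad\qquad \Xi\vdash\FCS_2(\Pi_0,\Sigma),
\]
obtained by $\FH\dashv\FCS_1$ read from conclusion to premise, and then $\FH\dashv\FCS_2$. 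The first step builds $\FH(\Pi_0,\Xi)$, and your invariant records only that the other half $\Sigma$ is a good occurrence. The second step decomposes the built half using the postulate for the \emph{other} coordinate, which is not the inverse of the first step, and displays $\Xi$. The lemma holds here, since $\Xi$ is a substructure of the first sequent. But your induction hypothesis says nothing about $\Xi$, so the step cannot be carried out. Tracking one half is too weak. You must also know that every immediate component of the built half is a substructure of $\Pi\vdash\Sigma$ with the substitution property, except the single component leading back towards the original turnstile.

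This is precisely the strengthened hypothesis the paper uses. Among the instantiations of the metavariables of the last display rule applied, all but at most one are substructures of $\Pi\vdash\Sigma$, and putting the exceptional one on display yields a sequent already occurring in the derivation. The paper combines this with a preliminary reduction to derivations in which no sequent occurs twice; you need this too, and it replaces your informal ``that returns the earlier occurrence''. Together these show that the structure on display after the last step is never the exceptional one, so it is a substructure. The substitution property then follows because that structure is only ever moved around as a metavariable instance, never decomposed.

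Your explicit $\equiv_D$ computation in the decomposing case is a cleaner route to the substitution property than the paper's appeal to replaying the inverse derivation. It goes through once you carry the substitution property for all these components rather than for a single half. One way to see the correct invariant: display-equivalent sequents are the same structure tree with the turnstile on different edges, and the side not containing the original turnstile edge is the required substructure.
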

\begin{proof}
By assumption, a derivation exists of $\Pi'\vdash \Sigma'$ from $\Pi\vdash \Sigma$ that consists only of applications of display postulates. We proceed by induction on the length $n$ of this derivation with the following stronger induction hypothesis: In $\Pi'\vdash\Sigma'$ the instantiations of all the metavariables with the exception of exactly one that appear in the last display rule that was applied, are substructures of $\Pi\vdash\Sigma$. If $X_0$ is the metavariable whose instantiation in $\Pi'\vdash\Sigma'$ is not a substructure of $\Pi\vdash\Sigma$, then, by applying to $\Pi'\vdash\Sigma'$ the display rule that puts $X_0$ on display, we obtain a sequent $X_0\vdash^{\varepsilon_{X_0}}\Delta$, 
that has already appeared in the proof. 

We can assume without loss of generality that every sequent appears only once in the proof; otherwise, the proof can be shortened. In this context in particular, the induction hypothesis implies that the instantiation of the variable in display in the last application of the rule is a substructure of $\Pi\vdash\Sigma$. Furthermore, the induction hypothesis implies that the structure that instantiates the metavariable on display in the last application of a rule in the proof always appears as a substructure that instantiates a variable. 
Hence, we can substitute it in the inverse proof with any other substructure. 

Now we can proceed with the induction proof. The statement holds vacuously for a proof of size $0$.
Let us assume that  $n\geq 1$, and that the statement holds for all derivations of length $k<n$.  If the last rule applied is $\FH\dashv \check{f}^\sharp_i$, then the derivation looks as follows:
\begin{center}
\AXC{$\Pi \fCenter \Sigma$}

\UIC{$\vdots$}
\UI$\FH(\overline{\Xi})[\Pi']_i \fCenter T$
\LL{\fns$\FH\dashv \check{f}^\sharp_i$}
\UI$\Pi' \fCenter \check{f}^\sharp_i(\overline{\Xi})[T]_i$
\DP     
\end{center} 
If $\FH(\overline{\Xi})[\Pi']_i$ corresponded to an instantiation of a metavariable in the previous application of a display rule, then by the induction hypothesis, it is a substructure of $\Pi\vdash\Sigma$, in which case $\overline{\Xi}$ are all substructures of $\Pi\vdash\Sigma$, and so is $\Pi'$ and applying a display rule that puts $T$ on display yields a sequent that has already appeared in the proof, namely $\FH(\overline{\Xi})[\Pi']_i \fCenter T$.

If $\FH(\overline{\Xi})[\Pi']_i$ doesn't correspond to an instantiation of a metavariable in the previous application of a display rule, then $\Pi'$, $\overline{\Xi}$, and $T$ do. 
In this case, the instantiation that doesn't correspond to a substructure of $\Pi\vdash\Sigma$ cannot be $\Pi'$, since then the sequent $\Pi' \fCenter \check{f}^\sharp_i(\overline{\Xi})[T]_i$ has already appeared in the proof, a possibility we have excluded. Hence, the required property also holds in this case.
In case of different display rules we argue similarly. This concludes the induction step and the proof.
\end{proof}

\section{Interpolation properties in display calculi}\label{subsec:interpolation_proof_LE}

In this section, we introduce the refinements of the Lyndon and Maehara interpolation notions in the context of display calculi, and prove that in this context the two notions coincide. We show that the rules of the basic normal LE-logic $\mathbf{L}_{\mathrm{LE}}$ in any LE-language preserve the existence of such interpolants, made precise through the notion of the local interpolation property. Through these results, we show that the Maehara  interpolation properties hold for $\mathbf{L}_{\mathrm{LE}}$ in any LE-language.

\subsection{Lyndon and Maehara interpolation}

  \begin{definition}\label{def:local_interpolant_cl}
  A {\em Lyndon interpolant} for a $\mathbf{D.LE}$ sequent $\Pi \vdash \Sigma$ is an $\mathrm{LE}$-formula $\gamma$ such that both $\Pi \vdash \gamma$ and $\gamma \vdash \Sigma$ are derivable in $\mathbf{D.LE}$; moreover, $\mathrm{Var}^+(\gamma) \subseteq \mathrm{Var}^+(\Pi) \cap \mathrm{Var}^+(\Sigma)$, and $\mathrm{Var}^-(\gamma) \subseteq \mathrm{Var}^-(\Pi) \cap \mathrm{Var}^-(\Sigma)$, where $\mathrm{Var}^+(\Upsilon)$ (resp.~$\mathrm{Var}^-(\Upsilon)$) is the set of the atomic variables occurring positively (resp.~negatively) in $\Upsilon$. The Lyndon interpolation property holds for a sequent $\Pi \vdash \Sigma$ if  a Lyndon interpolant for it exists.
\end{definition}

 \begin{definition} \label{def:local_interpolant_ml}
The {\em Maehara interpolation property} holds for a $\mathbf{D.LE}$ sequent $\Pi \vdash \Sigma$  if for every substructure $X$ occurring in the sequent, some $\mathcal{L}$-formula $\gamma_X$ exists such that both $X \vdash^{\varepsilon_X} \gamma_X$ and $(\Pi \vdash \Sigma)[\gamma_X/X]$ are derivable in $\mathbf{D.LE}$, and $\mathrm{Var}^+(\gamma_X) \subseteq \mathrm{Var}^+(X) \cap \mathrm{Var}^+((\Pi \vdash \Sigma)[[-]/X])$, and $\mathrm{Var}^-(\gamma_X) \subseteq \mathrm{Var}^-(X) \cap \mathrm{Var}^-((\Pi \vdash \Sigma)[[-]/X])$.
  \end{definition}

\begin{definition}\label{def:local_interpolation_property}
The {\em local} Lyndon/Maehara interpolation property holds for a calculus  if it is preserved by each  rule of the calculus; i.e.,  if it holds for (any instantiation of) the premises of any given rule, then it does for (the instantiation of) the conclusion.
\end{definition}

\begin{lemma}\label{lem:displaycraigtomaehara}
    A $\mathbf{D.LE}$ sequent $\Pi \vdash \Sigma$ has  the Maehara interpolation property if and only if every sequent, $\Pi' \vdash \Sigma'$, display equivalent to $\Pi \vdash \Sigma$, has a Lyndon interpolant.
\end{lemma}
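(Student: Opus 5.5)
We prove the two directions separately. Both rely on the display property together with Lemma \ref{lem:four_possible_decompositions}, and on the fact that display postulates are invertible and preserve derivability. We also use that they preserve the polarity with which each atomic variable occurs in the sequent taken as a whole: every display postulate moves structures across the turnstile exactly in accordance with the order-type of the connective involved, so the sign of each occurrence is unchanged. We read $\mathrm{Var}^\pm$ of a sequent as computed with $\Pi$ counted positively and $\Sigma$ negatively, as is implicit in Definition \ref{def:local_interpolant_ml}.

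\emph{($\Rightarrow$)} Assume $\Pi\vdash\Sigma$ has the Maehara property, and let $\Pi'\vdash\Sigma'$ be display equivalent to it. By Lemma \ref{lem:four_possible_decompositions}, either $\Pi'$ or $\Sigma'$ is a substructure $X$ of $\Pi\vdash\Sigma$. Suppose it is $\Pi'$; the other case is order-dual. Then for every $\Gamma$ the sequent $\Gamma\vdash\Sigma'$ is display equivalent to $(\Pi\vdash\Sigma)[\Gamma/\Pi']$, so $\varepsilon_X=1$. Take $\gamma=\gamma_X$. The sequent $\Pi'\vdash\gamma$ is derivable by hypothesis. Moreover, $(\Pi\vdash\Sigma)[\gamma/X]$ is derivable and display equivalent to $\gamma\vdash\Sigma'$, so $\gamma\vdash\Sigma'$ is derivable. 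For the variable conditions, $\mathrm{Var}^\pm(X)=\mathrm{Var}^\pm(\Pi')$. Since display postulates preserve polarity, the positive and negative variables of the context $(\Pi\vdash\Sigma)[[-]/X]$ are exactly those of $\Sigma'$, with the sign convention above. Hence $\gamma$ is a Lyndon interpolant of $\Pi'\vdash\Sigma'$.

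\emph{($\Leftarrow$)} Assume every display-equivalent sequent has a Lyndon interpolant, and let $X$ be a substructure. By the display property there is a display-equivalent sequent $X\vdash\Sigma'$ (if $\varepsilon_X=1$) or $\Pi'\vdash X$ (if $\varepsilon_X=\partial$). Let $\gamma$ be its Lyndon interpolant and set $\gamma_X=\gamma$.
\begin{itemize}
\item In the first case, $X\vdash\gamma$ is derivable. Also $\gamma\vdash\Sigma'$ is derivable, and it is display equivalent to $(\Pi\vdash\Sigma)[\gamma/X]$ by the substitution clause of the display property.
\item The second case is symmetric.
\end{itemize}
The variable conditions transfer exactly as before.

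The main obstacle is the polarity bookkeeping: showing that $\mathrm{Var}^\pm$ of the context $(\Pi\vdash\Sigma)[[-]/X]$ coincides with $\mathrm{Var}^\pm$ of $\Sigma'$ (resp.\ $\Pi'$). We would handle it with a short induction on the length of the display derivation. The key check, done rule by rule, is that each display postulate preserves the sign of every occurrence: for instance, in $(\FH,\FHS_k)$ a $\partial$-coordinate of $\FH$ on the left becomes the right side of the turnstile. This requires fixing the polarity conventions precisely: $\Pi$ positive, $\Sigma$ negative, and $\partial$-coordinates flipping the sign. The rest is direct from the display property and Lemma \ref{lem:four_possible_decompositions}.
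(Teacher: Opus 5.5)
Your derivability argument is the paper's own: Lemma~\ref{lem:four_possible_decompositions} with its substitution clause for ($\Rightarrow$), and the display property for ($\Leftarrow$). The paper never checks the variable conditions, so the polarity bookkeeping is your addition, and that is where the proposal breaks. Your uniform convention (antecedent counted positively, succedent negatively) fails in exactly the case you write out.

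Take $\varepsilon_X=1$ and the display $X\vdash\Sigma'$. Your convention gives the context $[-]\vdash\Sigma'$ the positive variables $\mathrm{Var}^-(\Sigma')$ and the negative variables $\mathrm{Var}^+(\Sigma')$. Here $\mathrm{Var}^\pm(\Sigma')$ are the intrinsic polarities of Definition~\ref{def:local_interpolant_cl}. The Maehara condition then yields $\mathrm{Var}^+(\gamma)\subseteq\mathrm{Var}^-(\Sigma')$. A Lyndon interpolant of $\Pi'\vdash\Sigma'$ needs $\mathrm{Var}^+(\gamma)\subseteq\mathrm{Var}^+(\Sigma')$. So ``hence $\gamma$ is a Lyndon interpolant'' does not follow, and ``the variable conditions transfer exactly as before'' fails in ($\Leftarrow$) for the same reason.

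Under your reading the lemma is actually false. Let $X$ be the left occurrence of $p$ in $p\vdash p$. The context $[-]\vdash p$ then has no positive variables, and $X=p$ has no negative ones. So Maehara would demand a variable-free $\gamma$ with $p\vdash\gamma$ and $\gamma\vdash p$, which cannot exist by soundness. Yet $p\vdash p$, its only display-equivalent sequent, has the Lyndon interpolant $p$. Your convention is correct only when $\varepsilon_X=\partial$, so no single side-based convention can work.

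The fix is to measure the context's polarity relative to the hole. Define $\mathrm{Var}^\pm((\Pi\vdash\Sigma)[[-]/X])$ to be $\mathrm{Var}^\pm(\Sigma')$ if $\varepsilon_X=1$, and $\mathrm{Var}^\pm(\Pi')$ if $\varepsilon_X=\partial$. Here $X\vdash\Sigma'$, resp.\ $\Pi'\vdash X$, is a sequent displaying $X$. Equivalently, use your sequent-level sign $s$ (antecedent $+$, succedent $-$, flipped at each $\partial$-coordinate). An occurrence in the context then counts as positive iff its $s$-sign is $-$ when $\varepsilon_X=1$, and $+$ when $\varepsilon_X=\partial$.

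Your rule-by-rule check that display postulates preserve $s$ is exactly what this needs. It shows the definition does not depend on which display of $X$ is chosen. In particular, the $\Sigma'$ (resp.\ $\Pi'$) given by Lemma~\ref{lem:four_possible_decompositions} has the same signed variables as the display used in the definition. With that in place, both directions go through as you describe.
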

\begin{proof}
    Assume that $\Pi \vdash \Sigma$ has the local Maehara interpolation property and let $\Pi'\vdash \Sigma'$ be display equivalent to $\Pi \vdash \Sigma$ . By Lemma \ref{lem:four_possible_decompositions}, it follows that either $\Pi'$ or $\Sigma'$ is a substructure of $\Pi\vdash\Sigma$. Let's assume that we are in case 1 of Lemma \ref{lem:four_possible_decompositions}, and so, in particular, $\Pi'$ is a substructure of $\Pi\vdash\Sigma$. By assumption, there exists an interpolant $\gamma$ such that $\Pi'\vdash\gamma$ and $(\Pi\vdash\Sigma) [\gamma/\Pi']$ are derivable. Then by Lemma \ref{lem:four_possible_decompositions} we have that $\gamma\vdash \Sigma'$ is display equivalent to $(\Pi\vdash\Sigma) [\gamma/\Pi']$. Hence, $\gamma$ is a Lyndon interpolant of   $\Pi'\vdash \Sigma'$. For the other case we work analogously.

    Now assume that every sequent, $\Pi'\vdash \Sigma'$, display equivalent to $\Pi\vdash\Sigma$ has a Lyndon interpolant, and let $X$ be a substructure of $\Pi\vdash\Sigma$. Let's assume without loss of generality that $\varepsilon_\Delta=1$, By the display property, there exists a sequent, $X\vdash\Sigma'$, which is display equivalent to $\Pi\vdash\Sigma$, such that for every $\mathcal{F}$-substructure $\Gamma$, $\Gamma\vdash\Sigma'$ is display equivalent to $(\Pi\vdash\Sigma)[\Gamma/X]$. By assumption,  $X\vdash\Sigma'$ has a Lyndon interpolant $\gamma$, hence $X\vdash\gamma$ and $\gamma\vdash\Sigma'$ are derivable. The second sequent is display equivalent to $(\Pi\vdash\Sigma)[\gamma/X]$. Hence, $\Pi\vdash\Sigma$ has the Maehara interpolation property.
\end{proof}

\subsection{The interpolation result}

\begin{lemma}
   The local Lyndon interpolation property holds for a display calculus if and only if the local Maehara interpolation property holds for it.
\end{lemma}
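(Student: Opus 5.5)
The plan is to reduce the statement to Lemma \ref{lem:displaycraigtomaehara}, after first making precise what ``local Lyndon interpolation'' must mean for it to be comparable with the Maehara version. Since display postulates are rules of the calculus, the local Lyndon property demands that each rule, display postulates included, preserves the existence of Lyndon interpolants. The key observation is therefore that in any derivation closed under display postulates, the class of sequents for which local Lyndon interpolation is guaranteed is closed under display equivalence. Lemma \ref{lem:displaycraigtomaehara} then turns this closure into the Maehara property, and conversely.

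\emph{Direction ($\Leftarrow$): Maehara implies Lyndon.} Assume the local Maehara property holds, and suppose every premise of a rule instance has a Lyndon interpolant. I need a Maehara interpolant for each premise before I can apply the Maehara hypothesis. A premise that is derivable from sequents satisfying the local property sits inside a derivation. Closing the premises under display postulates, every display-equivalent variant of a premise is also the conclusion of a derivation in which each step preserves the property. By Lemma \ref{lem:displaycraigtomaehara}, each premise then has the Maehara property. The local Maehara hypothesis transfers this to the conclusion. Applying Lemma \ref{lem:displaycraigtomaehara} once more, and taking the conclusion itself as one of its display equivalents, yields a Lyndon interpolant for the conclusion.

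\emph{Direction ($\Rightarrow$): Lyndon implies Maehara.} Assume the local Lyndon property, and let the premises of a rule have the Maehara property. By Lemma \ref{lem:displaycraigtomaehara}, every display variant of each premise has a Lyndon interpolant. Take any sequent $\Pi'\vdash\Sigma'$ display equivalent to the conclusion. It is obtained from the conclusion by a chain of display postulate applications, and each of these is itself a rule of the calculus. The rule application produces a Lyndon-interpolable conclusion, and then each display step, viewed as a one-premise rule, preserves Lyndon interpolability by the hypothesis. Hence every display variant has a Lyndon interpolant, and Lemma \ref{lem:displaycraigtomaehara} gives the Maehara property for the conclusion.

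\emph{Main obstacle.} The delicate point is the ($\Leftarrow$) direction. The local Lyndon hypothesis on premises alone does not formally give Lyndon interpolants for their display variants. I would handle this by reading the local properties as invariants of derivations, that is, as holding for all derivable sequents built with the rules, display postulates included. Under that reading both properties collapse to ``every derivable sequent, together with all its display variants, is interpolable''. An alternative is to show directly that a Lyndon interpolant transports along a single display postulate. This works because display postulates leave the principal structures and the variable polarities unchanged, so the same formula interpolates the displayed sequent of the complementary substructure. Lemma \ref{lem:four_possible_decompositions} would be used to check that the polarity conditions in Definition \ref{def:local_interpolant_ml} match those in Definition \ref{def:local_interpolant_cl}.
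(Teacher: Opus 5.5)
Your ($\Rightarrow$) direction is the paper's argument, and it is the only direction the paper's proof spells out. Your ($\Leftarrow$) direction, however, has a genuine gap, which you flag but do not close. From the bare hypothesis that each premise has a Lyndon interpolant, nothing in your argument yields the Maehara property for the premises. The remark that a premise ``sits inside a derivation'' presupposes that the premise is derivable, and you never derive this from the hypothesis. Your closing paragraph then replaces this step by two repairs, neither of which works.
\begin{itemize}
\item Reading ``local'' as ``holds for every derivable sequent'' replaces Definition~\ref{def:local_interpolation_property}, which is a rule-by-rule preservation condition, by a different global statement. So it proves a different lemma.
\item The claim that a Lyndon interpolant transports unchanged along a display postulate is false. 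The sequent $\hat{\Diamond}p\vdash\Diamond p$ has the Lyndon interpolant $\Diamond p$. Its display variant $p\vdash\check{\blacksquare}\Diamond p$ is not interpolated by $\Diamond p$, since $p\vdash\Diamond p$ is not derivable.
\end{itemize}
The underlying reason is that a Lyndon interpolant sits between the whole antecedent and the whole succedent. A display variant instead needs a formula between a substructure and its context, i.e.\ a Maehara interpolant. Rebuilding one from $\gamma$, e.g.\ as $f^\sharp_k(\overline{\xi})[\gamma]_k$, drags in the variables of the side structures $\overline{\Xi}$ and breaks the variable condition.

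The missing idea is short. A Lyndon interpolant $\gamma$ for $\Pi\vdash\Sigma$ makes $\Pi\vdash\Sigma$ derivable: apply Cut to $\Pi\vdash\gamma$ and $\gamma\vdash\Sigma$, or invoke cut admissibility. Separately, local Maehara for every rule, including the zero-ary ones, gives by induction on derivations that every derivable sequent has the Maehara property, exactly as in the Local interpolation corollary. Now suppose the premises of a rule instance have Lyndon interpolants. Then they are derivable, so the conclusion is derivable, so the conclusion has the Maehara property. Taking $X$ to be its whole antecedent, or applying Lemma~\ref{lem:displaycraigtomaehara} to the conclusion itself, gives its Lyndon interpolant. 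This is the legitimate form of your ``invariants of derivations'' idea: the global property is derived from the local hypothesis rather than substituted for it.
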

\begin{proof}
    If the local Lyndon interpolation property holds for a display calculus $D$, then the display rules of $D$ preserve the local Lyndon interpolation property. Hence, if the Lyndon interpolation property holds for a sequent $\Pi\vdash\Sigma$, it also holds for all its display-equivalent sequents, and hence, by Lemma \ref{lem:displaycraigtomaehara}, the Maehara interpolation property also holds for $\Pi\vdash\Sigma$. Hence, the Maehara interpolation property is preserved by the rules of the calculus. 
\end{proof}

\begin{lemma}\label{lem:substructures of variables}
    Let \begin{center}
\AXC{$\Phi_1\fCenter \Psi_1$}
\AXC{$\cdots$}
\AXC{$\Phi_n\fCenter \Psi_n$}
\LL{\fns R}
\TIC{$\Phi_0 \fCenter \Psi_0$}
\DP
\end{center}
be a rule of a display calculus such that each structural metavariable occurs at most once  in each premise\footnote{Notice that here we do not refer to analytic or structural rules only, hence the basic logical introduction rules are also included.}. Consider an instantiation of $R$, \begin{center}
\AXC{$\Pi_1\fCenter \Sigma_1$}
\AXC{$\cdots$}
\AXC{$\Pi_n\fCenter \Sigma_n$}
\LL{\fns $R_{inst}$}
\TIC{$\Pi_0 \fCenter \Sigma_0$}
\DP
\end{center} and let $\Gamma$ be an $\mathcal{F}$-substructure of the instantiation of a metavariable $X$ of $R$ in $R_{inst}$. If, for every $\Phi_i\vdash \Psi_i$ in which $X$ occurs, 
some $\gamma_i$ exists such that $\Gamma\vdash\gamma_i$ and $(\Pi_i\vdash \Sigma_i)[\gamma_i/\Gamma]_k$ are derivable, then  some $\gamma_0$ exists such that $\Gamma\vdash\gamma_0$ and $(\Pi_0\vdash \Sigma_0)[\gamma_0/\Gamma]_k$ are derivable. The dual statement holds for a $\mathcal{G}$-substructure $\Delta$.
\end{lemma}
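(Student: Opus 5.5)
The idea is to take as interpolant a conjunction of the premise interpolants, and then replay the rule $R$ with $\Gamma$ replaced by that conjunction. Let $I \subseteq \{1,\ldots,n\}$ be the set of premises in which $X$ occurs. Since $X$ occurs at most once in each such premise, its instantiation contributes exactly one copy of $\Gamma$ to $\Pi_i\vdash\Sigma_i$. So the substitution $[\gamma_i/\Gamma]$ in the hypothesis is unambiguous. The same holds for the conclusion, where every metavariable occurs exactly once; $k$ names that occurrence.

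If $I=\emptyset$, $X$ occurs only in the conclusion. Take $\gamma_0 := \top$. Then $\Gamma\vdash\top$ holds by $\top_R$. Moreover, replacing the instantiation $\Xi$ of $X$ by $\Xi[\top/\Gamma]$ gives another instance of $R$ with the same premises, so $(\Pi_0\vdash\Sigma_0)[\top/\Gamma]$ is derivable.

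If $I\neq\emptyset$, set $\gamma_0 := \bigwedge_{i\in I}\gamma_i$. From the derivations of $\Gamma\vdash\gamma_i$, repeated applications of $\wedge_R$ give $\Gamma\vdash\gamma_0$. For each $i\in I$ we then derive $(\Pi_i\vdash\Sigma_i)[\gamma_0/\Gamma]$ in three steps:
\begin{enumerate}
\item Use the display property to rewrite $(\Pi_i\vdash\Sigma_i)[\gamma_i/\Gamma]$ as a display equivalent sequent $\gamma_i\vdash\Sigma'$. This sequent is display equivalent to $(\Pi_i\vdash\Sigma_i)[\Gamma'/\Gamma]$ for every $\mathcal{F}$-structure $\Gamma'$.
\item Apply $\wedge_{L}$ (repeatedly if $|I|>2$) to obtain $\gamma_0\vdash\Sigma'$.
\item Display back to obtain $(\Pi_i\vdash\Sigma_i)[\gamma_0/\Gamma]$.
\end{enumerate}
This step uses that $\Gamma$ is an $\mathcal{F}$-structure. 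In $\Pi_i\vdash\Sigma_i$ it then occurs in antecedent position ($\varepsilon_\Gamma=1$), so it can be displayed on the left, where the left conjunction rule applies. Premises not in $I$ are left untouched.

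Finally, we instantiate $R$ again, this time sending $X$ to $\Xi[\gamma_0/\Gamma]$ and every other metavariable to its original instantiation. The premises of this new instance are exactly the sequents $(\Pi_i\vdash\Sigma_i)[\gamma_0/\Gamma]$ for $i\in I$, together with the untouched premises for $i\notin I$. Its conclusion is $(\Pi_0\vdash\Sigma_0)[\gamma_0/\Gamma]_k$. This uses that $\Gamma$ sits inside the instantiation of $X$, so substituting inside $\Xi$ commutes with instantiating $R$.

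The dual statement for a $\mathcal{G}$-substructure $\Delta$ is obtained symmetrically. There we use $\bigvee_{i\in I}\gamma_i$ (or $\bot$ when $I=\emptyset$) together with the rules $\vee_L$, $\vee_R$, $\bot_L$.

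The main obstacle is the polarity bookkeeping in step 1. We must check that the occurrence of $\Gamma$ really is displayable on the left in every premise in which $X$ occurs. This follows from $\Gamma$ being an $\mathcal{F}$-structure: by the two-sorted grammar, $\mathcal{F}$-structures are always displayed as antecedents. We must also check that the replacement $\Xi[\gamma_0/\Gamma]$ is still a well-sorted structure, which holds because $\gamma_0$ is a formula and formulas are structures of both sorts.
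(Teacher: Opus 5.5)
Your proposal is correct and follows essentially the same route as the paper: take $\gamma_0 := \bigwedge_{i}\gamma_i$ (dually $\bigvee_i\gamma_i$) and obtain $\Gamma\vdash\gamma_0$ by $\wedge_R$. You then turn each premise into $(\Pi_i\vdash\Sigma_i)[\gamma_0/\Gamma]$ using display rules and $\wedge_L$, and reapply $R$ with $X$ instantiated to $\Xi[\gamma_0/\Gamma]$. The paper leaves two points implicit that you make explicit: the case where $X$ occurs in no premise (interpolant $\top$, dually $\bot$), and the requirement that $X$ occur exactly once in the conclusion.
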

\begin{proof}
    Let 
    $S=\{i \mid \text{ the metavariable $X$ occurs in }\Phi_i\vdash \Psi_i\}$. Let $\gamma_0: = \bigwedge_{i\in S}\gamma_i$. Using the right introduction for $\land$ we conclude that $\Gamma\vdash\bigwedge_{i\in S}\gamma_i$. Using display rules and the left introduction of $\land$ we obtain  $(\Pi_i\vdash \Sigma_i)[\bigwedge_{i\in S}\gamma_i/\Gamma]_k$ for every $i\in S$. Applying $R$ to these assumptions we conclude  $(\Pi_0\vdash \Sigma_0)[\gamma_0/\Gamma]_k$. If $\Delta$ is a  $\mathcal{G}$-substructure of the instantiation of a metavariable $X$, we use the introduction rules for $\lor$ to obtain $\delta_0=\bigvee_{i\in S}\delta_i$.
\end{proof}
\begin{thm} \label{thm:interpolation_LE}
The local Maehara-Lyndon interpolation property holds for the calculus $\mathbf{D.LE}$ associated with the basic LE-logic of any LE-language $\mathcal{L}$.  
\end{thm}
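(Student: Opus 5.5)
We prove the statement rule by rule. For each rule of $\mathbf{D.LE}$ (cut-free, since cut elimination is available, so we only need the rules other than Cut), we assume that every premise has the Maehara interpolation property. Then, for an arbitrary substructure $X$ of the conclusion $\Pi_0\vdash\Sigma_0$, we produce $\gamma_X$ with $X\vdash^{\varepsilon_X}\gamma_X$ and $(\Pi_0\vdash\Sigma_0)[\gamma_X/X]$ derivable and with the right polarity constraints.

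The organising distinction is whether $X$ lies inside the instantiation of some metavariable of the rule, or whether $X$ is (part of) the rule's own skeleton: the principal formula or the structural connectives introduced by the rule.

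\textbf{Case 1: $X$ is inside a metavariable instantiation.} This is handled uniformly by Lemma \ref{lem:substructures of variables}. All rules of $\mathbf{D.LE}$ have each metavariable occurring at most once per premise, so the lemma's hypothesis holds, and the Maehara property of the premises supplies the $\gamma_i$. The interpolant is the conjunction (resp.\ disjunction) $\gamma_0=\bigwedge_{i\in S}\gamma_i$. For the variable condition we check two things:
\begin{itemize}
\item $\mathrm{Var}^\pm(\gamma_0)\subseteq \mathrm{Var}^\pm(X)$ is immediate.
\item $\mathrm{Var}^\pm(\gamma_0)\subseteq\mathrm{Var}^\pm$ of the context $(\Pi_0\vdash\Sigma_0)[[-]/X]$ requires that the context of $X$ in each premise has polarity-variables contained in those of its context in the conclusion.
\end{itemize}
The second item holds because the rules of $\mathbf{D.LE}$ have the subformula property and are polarity-preserving: display rules only rearrange, and the introduction rules only replace $A_i$ occurring with polarity $\varepsilon_i$ by $f(\overline{A})$, in which $A_i$ occurs with the same polarity relative to the sequent. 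For display rules the whole matter is trivial: the set of substructures and their contexts are preserved, and the interpolant can be reused verbatim.

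\textbf{Case 2: the axioms and the remaining substructures.} For the zero-premise rules we give explicit interpolants.
\begin{itemize}
\item For $\mathrm{Id}_p$: for $X=p$ on either side take $\gamma=p$.
\item For $\top_R$, $\bot_L$ and the $0$-ary $f_R,g_L$: take $\top$ or $\bot$ as appropriate (e.g.\ for $\Pi\vdash\top$ with $X$ inside $\Pi$, take $\gamma_X=\top$, which has no variables and satisfies $(\Pi\vdash\top)[\top/X]$ by $\top_R$).
\end{itemize}
For substructures not inside any metavariable of a one-premise rule ($\top_L$, $\bot_R$, $\wedge_{Li}$, $\vee_{Ri}$, $f_L$, $g_R$), $X$ is the principal formula. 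Here we use the display property to display it and take the principal formula itself as the interpolant. For instance, for $f_L$ with $X=f(\overline{A})$ we take $\gamma_X=f(\overline{A})$: then $X\vdash\gamma_X$ is an identity (derivable for compound formulas by the standard identity-expansion), and $(\Pi_0\vdash\Sigma_0)[\gamma_X/X]$ is the conclusion itself. Dually, the trivial interpolant on the other side is the formula of the displayed complement; this is justified by Lemma \ref{lem:displaycraigtomaehara} applied to display-equivalent variants.

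\textbf{Main obstacle: the multi-premise rules $f_R$, $g_L$, $\wedge_R$, $\vee_L$ when $X$ contains the rule's structural skeleton.} The hardest instance is $X=\FH(\OXI)$ in $f_R$, or a substructure that mixes the new connective $\FH$ with parts of several $\Xi_i$, or the complementary context after display. The plan is:
\begin{enumerate}
\item From each premise $\Xi_i\vdash^{\varepsilon_i}A_i$, take the Maehara interpolant $\gamma_i$ for $\Xi_i$, so that $\Xi_i\vdash^{\varepsilon_i}\gamma_i$ and $\gamma_i\vdash^{\varepsilon_i}A_i$.
\item Set $\gamma_X=f(\overline{\gamma})$. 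Then $\FH(\OXI)\vdash f(\overline{\gamma})$ follows by $f_R$, and $f(\overline{\gamma})\vdash f(\overline{A})$ follows by $f_L$ and $f_R$.
\item Check polarities: $\gamma_i$ occurs in $f(\overline{\gamma})$ with polarity $\varepsilon_i$, exactly as $\Xi_i$ does in $\FH(\OXI)$ and $A_i$ does in $f(\overline{A})$.
\item When the substructure instead isolates a single $\Xi_j$ together with the principal formula on the opposite side, we are back in Case 1. Otherwise we combine the interpolants $\gamma_i$ of the remaining coordinates, placing them in the residual connective $f^\sharp_j$ that arises from displaying, and verify derivability by display rules plus $f_R$/$f_L$.
\item For $\wedge_R$ and $\vee_L$, the analogous interpolant is the conjunction (resp.\ disjunction) of the premises' interpolants.
\end{enumerate}
I expect the careful bookkeeping of these mixed substructures and their order-types to be the only genuinely delicate part.
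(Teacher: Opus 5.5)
Your overall architecture matches the paper's: Lemma~\ref{lem:substructures of variables} handles substructures inside metavariable instantiations, and explicit interpolants are given for the rule's own skeleton. Your interpolants for $\wedge_R$, $\vee_L$ and $f_R$ also coincide with the paper's; for $f_R$ this is $f(\overline{\gamma})$ for both $\FH(\OXI)$ and $f(\overline{A})$.

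\textbf{The gap: one-premise rules.} The problem is your treatment of $\top_L$, $\bot_R$, $\wedge_{Li}$, $\vee_{Ri}$, $f_L$, $g_R$. Taking the principal formula itself as $\gamma_X$ makes both derivability requirements trivial. But it violates the variable condition of Definition~\ref{def:local_interpolant_ml}, which requires $\mathrm{Var}^{\pm}(\gamma_X)$ to lie in the variables of the \emph{context} $(\Pi_0\vdash\Sigma_0)[[-]/X]$. Nothing guarantees that the variables of the principal formula occur there.
\begin{itemize}
\item $\wedge_{L1}$ applied to $p\vdash p$ yields $p\wedge q\vdash p$. Your choice $\gamma_X=p\wedge q$ is inadmissible, since $q$ does not occur in $[-]\vdash p$.
\item For unary monotone $f$, $f_L$ applied to $\FH(p\wedge q)\vdash f(p)$ yields $f(p\wedge q)\vdash f(p)$. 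Here $f(p\wedge q)$ is not an interpolant.
\end{itemize}
These rules are exactly where the premise's interpolant must be carried down. The paper takes the interpolant $\gamma$ of the active structure in the premise: $A_i$ in $A_i\vdash\Sigma$, $\FH(\overline{A})$ in $\FH(\overline{A})\vdash\Sigma$, $\AATOP$ in $\AATOP\vdash\Sigma$, and dually on the right. It then derives $A_1\wedge A_2\vdash\gamma$ by $\wedge_{L1}$, $f(\overline{A})\vdash\gamma$ by $f_L$, and so on. Both $\gamma\vdash\Sigma$ and the variable condition are inherited from the premise, because the context $\Sigma$ is unchanged and the signed variables of the active structure are among those of the principal formula. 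Your remark that the ``trivial interpolant on the other side'' is the displayed complement has the same defect. Those substructures lie in $\Sigma$, so your Case~1 already covers them.

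\textbf{Smaller inaccuracies.}
\begin{itemize}
\item For the $0$-ary $f_R$ and $g_L$, the interpolant must be the constant itself, not $\top$ or $\bot$. For $\FH\vdash f$, neither $\top\vdash f$ nor $\FH\vdash\bot$ is derivable in general.
\item Display rules do not literally preserve substructures. The conclusion $\Pi\vdash\FCS_k(\OXI)[\Sigma]_k$ contains the new substructure $\FCS_k(\OXI)[\Sigma]_k$, whose interpolant is the premise's interpolant for $\Pi$.
\item The ``mixed'' substructures in your step~4 do not exist. In $\FH(\OXI)\vdash f(\overline{A})$ the only substructures outside metavariable instantiations are $\FH(\OXI)$ and $f(\overline{A})$, and display-equivalent variants are handled by the display-rule case. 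So no residual $f^\sharp_j$ needs to enter an interpolant. This matters because the paper later relies on interpolants containing only connectives introduced by rules in the derivation.
\end{itemize}
Your explicit restriction to cut-free derivations, and your check of the context-variable condition in Case~1, are both appropriate; the paper leaves these implicit.
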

\begin{proof}
We need to show that the local Maehara-Lyndon interpolation property holds for each rule of $\mathbf{D.LE}$. Notice that all the basic rules satisfy the condition of Lemma \ref{lem:substructures of variables}. Hence we only need concern ourselves with structures in the conclusion that are not substructures of structural metavariables. First, let us consider the zero-ary rules:
    \begin{itemize}
    \item 
    \begin{tabular}{ccc}
        \AXC{\quad}
		\LL{\fns Id}
		\UIC{$p \vdash p$}
		\DP
        &
        \AXC{\quad}
		\RL{\fns $\top_R$}
		\UIC{$p \vdash \top$}
		\DP
        &
        \AXC{\quad}
		\LL{\fns $\bot_L$}
		\UIC{$\bot \vdash p$}
		\DP
        \end{tabular}
    \end{itemize}
There are no proper substructures in these sequents, hence Craig and Maehara interpolation coincide. The local interpolants are $p$, $\top$, and $\bot$, respectively, which clearly also satisfy the Lyndon conditions for the variables.

Now we need to consider all the other rules and show that, assuming there is an interpolant for each premise, how to construct the interpolant for the conclusion. Let us consider the display postulates for a structural $\mathcal{F}$-connective which is monotone in its $k$ coordinate, i.e.~$\varepsilon_{f,k} = 1$:
\begin{itemize}
\item  \begin{tabular}{c}
\AX$\FH\, (\OXI)[\Pi]_k \fCenter \Sigma$
\doubleLine
\LL{\fns $\FH \dashv \FCS_k$}
\UI$\Pi \fCenter \FCS_k\, (\OXI)[\Sigma]_k$
\DP 
\end{tabular}
\end{itemize}
The only case not covered by Lemma \ref{lem:substructures of variables} is when $\Delta = \FCS_k\, (\OXI)[\Sigma]_k$. In this case, by assumption we have an interpolant $\gamma$ such that $\Pi\vdash\gamma$ and $\FH\, (\OXI)[\gamma]_k \fCenter \Sigma$ are derivable. Then, by applying the display rule $\FH \dashv \FCS_k$ from premise to conclusion, we have that $\gamma\vdash \FCS_k\, (\OXI)[\Sigma]_k$. Given we already know that $\Pi \vdash \gamma$, it follows that $\gamma$ is also an interpolant of the sequent in the conclusion. Hence, this display rule preserves the local Maehara-Lyndon interpolation property. The other display rules are shown analogously.

We now consider the binary rules introducing conjunction and disjunction:
     \begin{itemize}
        \item 
        \begin{tabular}{cc}
        \AX$\varphi \fCenter \Sigma$
		\AX$\psi \fCenter \Sigma$
		\LL{\fns $\aor_L$}
		\BI$\varphi \aor \psi \fCenter \Sigma$
		\DP
        & 
        \AX$\Pi\fCenter\varphi $
		\AX$\Pi\fCenter\psi $
		\RL{\fns $\aand_R$}
		\BI$\Pi\fCenter\varphi \aand \psi $
		\DP
        \end{tabular}
    \end{itemize}
Let us focus on the right introduction $\land_R$. The only case not covered by Lemma \ref{lem:substructures of variables} is when $\Delta = \varphi\land\psi$. Clearly, $\delta_\varphi\land\delta_\psi$ is the interpolant, where $\delta_\varphi$ is the interpolant for $\varphi$ and $\delta_\psi$ is the interpolant for $\psi$ in the premises. The proof for the $\lor_L$ is analogous.
    
      \begin{itemize}
        \item 
        \begin{tabular}{cccc}
          \AX$\varphi \fCenter \Sigma$
		\LL{\fns $\aand_{L_1}$}
		\UI$\varphi \aand \psi \fCenter \Sigma$
		\DP   
        &  
        \AX$\psi \fCenter \Sigma$
		\LL{\fns $\aand_{L_2}$}
		\UI$\varphi \aand \psi \fCenter \Sigma$
		\DP
        & 
        \AX$\Pi \fCenter \varphi $
		\RL{\fns $\aor_{R_1}$}
		\UI$\Pi \fCenter\varphi \aor \psi $
		\DP
        & 
        \AX$\Pi \fCenter \psi $
		\RL{\fns $\aor_{R_2}$}
		\UI$\Pi \fCenter\varphi \aor \psi $
		\DP           
        \end{tabular}
        
    \end{itemize}  
    
  For the left introduction of $\land$, the only non-trivial substructure is $\varphi\land \psi$. But then the same interpolant for $\psi$ in the assumption works. The other rules are shown similarly.

\begin{itemize}
        \item 
        \begin{tabular}{cccc}
          \AX$\AATOP \fCenter \Sigma$
		\LL{\fns $\top_{L}$}
		\UI$\top \fCenter \Sigma$
		\DP   
        &  
        \AX$\Pi \fCenter \ABOT$
		\RL{\fns $\bot_{R}$}
		\UI$\Pi \fCenter \bot$
		\DP
        & 
        \AX$\FH(\overline{\varphi})\fCenter \Sigma$
		\LL{\fns $f_{L}$}
		\UI$f(\overline{\varphi})\fCenter \Sigma$
		\DP
        & 
        \AX$\Pi \fCenter \GC(\overline{\varphi})$
		\RL{\fns $g_{R}$}
		\UI$\Pi \fCenter g(\overline{\varphi}) $
		\DP           
        \end{tabular}
        
    \end{itemize}    
These cases are similar to the previous case, and are hence omitted.
    \begin{itemize}
    \item 
    \begin{tabular}{cc}
       \AX$\AATOP \fCenter \Sigma$
		\LL{\fns $\aatop_W$}
		\UI$\Pi \fCenter \Sigma$
		\DP  
        &   \AX$\Pi  \fCenter \ABOT$
		\RL{\fns $\abot_W$}
		\UI$\Pi \fCenter \Sigma$
		\DP        
    \end{tabular}      
    \end{itemize}
The only case not covered by Lemma \ref{lem:substructures of variables} in $\aatop_W$ is when $\Delta$ is a substructure of $\Pi$. Then the interpolant is $\top^{\varepsilon_\Delta}$. Indeed, $\Delta\vdash^{\varepsilon_\Delta}\top^{\varepsilon_\Delta}$ is derivable, and so is $(\Pi\vdash\Sigma)[\top^{\varepsilon_\Delta}/\Delta]$, by an application of  $\aatop_W$. The case for $\abot_W$ is similar.

   \begin{itemize}
        \item \AxiomC{$\Big(\Upsilon_i \fCenter \varphi_i \quad \varphi_j \fCenter \Upsilon_j \mid 1\leq i, j\leq n_f, \varepsilon_{f}(i) = 1\mbox{ and } \varepsilon_{f}(j) = \partial\Big)$}
		\RL{\fns$f_R$}
		\UI$\FH\, (\Upsilon_1,\ldots, \Upsilon_{n_f})\fCenter f(\varphi_1,\ldots, \varphi_{n_f})$
		\DP
    \end{itemize}
The only cases not covered by Lemma \ref{lem:substructures of variables} are for the substructures $\FH\,(\Upsilon_1,\ldots, \Upsilon_{n_f})$ and $f(\varphi_1,\ldots, \varphi_{n_f})$. They will have the same interpolant, in particular, if $\sigma_i$ is the interpolant of $Y_i$, then the interpolant is $f(\sigma_1,\ldots, \sigma_{n_f})$. Verifying the conditions is routine. This concludes the proof.
\end{proof}

\begin{corollary}[Local interpolation]
    For any LE-language $\mathcal{L}_{\mathrm{LE}}$, any $\mathbf{D.LE}$-derivable sequent has the local Maehara-Lyndon interpolation property, and the interpolants can be effectively computed.
\end{corollary}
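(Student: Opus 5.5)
The plan is to argue by induction on the height of a $\mathbf{D.LE}$-derivation of the given sequent $\Pi\vdash\Sigma$, using Theorem \ref{thm:interpolation_LE} as the inductive engine. Since $\mathbf{D.LE}$ enjoys cut elimination, we may first replace the given derivation by a cut-free one. This step matters because the cut rule is not among the rules treated in Theorem \ref{thm:interpolation_LE}, and the cut formula may contain variables foreign to the endsequent, so cut cannot be expected to preserve the variable conditions.

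For the base case, the leaves of a cut-free derivation are instances of $\mathrm{Id}_p$, $\top_R$, $\bot_L$ (and the $0$-ary $f_R$/$g_L$ axioms for constants). The proof of Theorem \ref{thm:interpolation_LE} already exhibits explicit interpolants for these: $p$, $\top$, $\bot$, and the constant itself. For the inductive step, the last rule applied has premises which, by the induction hypothesis, enjoy the Maehara–Lyndon interpolation property. Theorem \ref{thm:interpolation_LE} states that every rule of $\mathbf{D.LE}$ preserves this property, so the conclusion has it as well. By Lemma \ref{lem:displaycraigtomaehara}, every display-equivalent form of the conclusion then also has a Lyndon interpolant.

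Effectiveness follows by inspecting the case analysis in Theorem \ref{thm:interpolation_LE} and in Lemma \ref{lem:substructures of variables}. Each interpolant for the conclusion is built from interpolants of the premises by a fixed syntactic operation: reuse, $\wedge$ or $\vee$ of finitely many premise interpolants, $\top^{\varepsilon}$/$\bot^{\varepsilon}$ for weakened material, or $f(\sigma_1,\ldots,\sigma_{n_f})$ for $f_R$ (dually $g(\ldots)$ for $g_L$). Hence a recursive procedure over the finite cut-free derivation computes, for every substructure occurrence at every node, its interpolant, and the derivations of $X\vdash^{\varepsilon_X}\gamma_X$ and $(\Pi\vdash\Sigma)[\gamma_X/X]$. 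Cut elimination itself is effective, by the Belnap-style metatheorem, so the whole pipeline is algorithmic.

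The main obstacle is bookkeeping: tracking substructure occurrences across a rule application. I must make sure that every substructure occurrence of the conclusion either lies inside the instantiation of a metavariable, which is handled by Lemma \ref{lem:substructures of variables} (its hypothesis that metavariables occur at most once per premise holds for all $\mathbf{D.LE}$ rules), or is one of the finitely many ``new'' structures treated explicitly in Theorem \ref{thm:interpolation_LE}. I would also check that the polarity conditions are preserved when interpolants are combined. For instance, in $f_R$ the premise interpolants $\sigma_i$ in antitone coordinates have their polarities flipped, and this matches the polarity flip of $\Upsilon_i$ inside $\FH$. I expect this to be routine, but it is where an error would most likely hide.
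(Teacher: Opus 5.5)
Your proposal is correct and follows the paper's argument: induction on the height of the derivation, with the base cases and the inductive step both supplied by Theorem~\ref{thm:interpolation_LE}. Your explicit first step of passing to a cut-free derivation is a useful clarification that the paper leaves implicit, since the proof of Theorem~\ref{thm:interpolation_LE} never treats $\mathrm{Cut}$ and cut need not preserve the variable conditions; and your observation that Belnap-style cut elimination is itself constructive is what makes the effectiveness claim cover arbitrary derivations.
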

\begin{proof}
Let $X\vdash Y$ be a derivable sequent and proceed by induction on the height of the derivation. The base cases are treated as in the proof of Theorem \ref{thm:interpolation_LE}. For the inductive step, if $R$  is the last rule applied in the derivation of $X\vdash Y$, then the existence of  a (local) interpolant for $X\vdash Y$ immediately follows from the inductive hypothesis and the local interpolation property holding for $R$ (cf.~Theorem \ref{thm:interpolation_LE}). 
\end{proof}

Applying the previous corollary to  $\mathbf{D.LE}$-sequents without structural connectives immediately yields the following 

\begin{corollary}[Lyndon interpolation]
    For any LE-language $\mathcal{L}_{\mathrm{LE}}$ and any $\mathcal{L}_{\mathrm{LE}}$-sequent $\varphi\vdash \psi$, if $\mathbf{L}_{\mathrm{LE}}\models \varphi\vdash \psi$, then $\mathbf{L}_{\mathrm{LE}}\models \varphi\vdash \gamma$ and $\mathbf{L}_{\mathrm{LE}}\models \gamma\vdash \psi$ for some $\gamma\in \mathcal{L}_{\mathrm{LE}}$ such that $\mathrm{Var}^+(\gamma) \subseteq \mathrm{Var}^+(\varphi) \cap \mathrm{Var}^+(\psi)$, and $\mathrm{Var}^-(\gamma) \subseteq \mathrm{Var}^-(\varphi) \cap \mathrm{Var}^-(\psi)$, and $\gamma$ 
    can be effectively computed.
\end{corollary}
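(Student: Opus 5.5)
The plan is to specialise the local interpolation corollary to a sequent with no structural connectives and then move between $\mathbf{L}_{\mathrm{LE}}$ and $\mathbf{D.LE}$. Suppose $\mathbf{L}_{\mathrm{LE}}\models\varphi\vdash\psi$.

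\emph{Step 1: move into the display calculus.} I would show that $\varphi\vdash\psi$ is derivable in $\mathbf{D.LE}$. This uses the standard completeness of $\mathbf{D.LE}$ with respect to $\mathbf{L}_{\mathrm{LE}}$, obtained by deriving each axiom and simulating each rule of $\mathbf{L}_{\mathrm{LE}}$. The non-obvious pieces are these:
\begin{itemize}
\item Transitivity is simulated by Cut$_\chi$.
\item Substitution holds because derivations are closed under uniform substitution of formulas for atoms, once identity axioms for complex formulas have been obtained from Id$_p$ by the introduction rules.
\item Monotonicity of $f$ and $g$ comes from $f_R$ followed by $f_L$, and dually from $g_L$ followed by $g_R$.
\item The distribution axioms come from the display postulates together with the lattice rules.
\end{itemize}
I would then invoke cut elimination for $\mathbf{D.LE}$ to obtain a cut-free derivation. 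Cut-freeness is what makes the preceding local-preservation theorem (Theorem \ref{thm:interpolation_LE}) applicable, since that theorem does not treat the Cut rule.

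\emph{Step 2: extract the interpolant.} By the local interpolation corollary, $\varphi\vdash\psi$ has the Maehara property, and the interpolants are computed by the induction on the derivation given in the proof of Theorem \ref{thm:interpolation_LE}. I would take the substructure $X:=\varphi$, the whole antecedent, so that $\varepsilon_X=1$. The Maehara property then yields a formula $\gamma$ such that $\varphi\vdash\gamma$ and $(\varphi\vdash\psi)[\gamma/\varphi]$, that is $\gamma\vdash\psi$, are both $\mathbf{D.LE}$-derivable. Its variable conditions read $\mathrm{Var}^\pm(\gamma)\subseteq\mathrm{Var}^\pm(\varphi)\cap\mathrm{Var}^\pm(\psi)$, because the context $(\varphi\vdash\psi)[[-]/\varphi]$ is $[-]\vdash\psi$ and the polarities of the variables of $\psi$ there are just their polarities in $\psi$. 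Equivalently, Lemma \ref{lem:displaycraigtomaehara} directly provides a Lyndon interpolant of $\varphi\vdash\psi$.

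\emph{Step 3: return to the Hilbert-style logic.} Since $\varphi\vdash\gamma$ and $\gamma\vdash\psi$ are formula sequents derivable in $\mathbf{D.LE}$, soundness of $\mathbf{D.LE}$ transfers them back to $\mathbf{L}_{\mathrm{LE}}$. This holds either by soundness with respect to complete $\mathcal{L}$-algebras combined with algebraic completeness of $\mathbf{L}_{\mathrm{LE}}$, or by translating each structural connective into its operational counterpart in $\mathbf{L}^*_{\mathrm{LE}}$. The translation lands a priori in $\mathbf{L}^*_{\mathrm{LE}}$, so for sequents in the original signature I would use conservativity of $\mathbf{L}^*_{\mathrm{LE}}$ over $\mathbf{L}_{\mathrm{LE}}$ to conclude. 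The same conservativity argument handles the language of $\gamma$ itself, which I address next.

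\emph{Main obstacle.} The delicate point is ensuring that $\gamma\in\mathcal{L}_{\mathrm{LE}}$ rather than $\mathcal{L}^*_{\mathrm{LE}}$. The $f_R$ case builds $f(\ldots)$ from interpolants, and the display postulates introduce residual structural connectives $\FCS_k,\GHF_\ell$. By the subformula property, however, a cut-free derivation of $\varphi\vdash\psi$ introduces only the operational connectives occurring in $\varphi,\psi$. The interpolant construction in Theorem \ref{thm:interpolation_LE} only applies operational connectives at rules $f_R$ and $g_L$ and uses $\wedge,\vee,\top,\bot$ elsewhere. An induction on the derivation should therefore show that the interpolants only contain connectives from $\mathcal{L}_{\mathrm{LE}}$. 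Effectiveness follows because cut elimination and the inductive construction are both algorithmic.
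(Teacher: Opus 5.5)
Your proposal is correct and takes the paper's route: the paper obtains this corollary in one line, by specialising the local Maehara--Lyndon interpolation corollary to the structure-free sequent $\varphi\vdash\psi$, with the whole antecedent as the displayed substructure. What you add are the bridging steps the paper leaves implicit, namely the transfer between $\mathbf{L}_{\mathrm{LE}}$ and $\mathbf{D.LE}$ via completeness, soundness and conservativity, and the appeal to cut elimination, which is genuinely needed since Theorem \ref{thm:interpolation_LE} does not treat Cut; your argument about the language of $\gamma$ is harmless but redundant, because in $\mathbf{D.LE}$ as defined formulas are already $\mathcal{L}_{\mathrm{LE}}$-formulas and only connectives in $\mathcal{F}\cup\mathcal{G}$ have introduction rules.
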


\section{Generalizing interpolation to axiomatic extensions} \label{subsec: interpolation_axioms_LE}

The proof strategy of the previous section can be extended to certain axiomatic extensions of $\mathbf{L}_{\mathrm{LE}}$ equivalently presented by extending $\mathbf{D.LE}$ with appropriate rules that preserve both cut-eliminability and interpolation.

\begin{definition}\label{def:InterpolationSafeRules}
For any LE-language, a {\em special} rule in the language of $\mathbf{D.LE}$ is an analytic structural rule (see Definition \ref{def:AnalyticStructuralRule}) of one of the following general forms:
\begin{center}
    \begin{tabular}{cc}
    \AXC{$\Pi \vdash \Psi_1$} 
    \AXC{$\cdots$}
    \AXC{$\Pi \vdash \Psi_n$}
    \TIC{$\Pi \vdash \Psi_0$}
    \DP
    &
    \AXC{$\Phi_1\vdash \Sigma$} 
    \AXC{$\cdots$}
    \AXC{$\Phi_n\vdash \Sigma$}
    \TIC{$\Phi_0\vdash \Sigma$}
    \DP
    \\         
    \end{tabular}
\end{center}
where  $\Pi$ (resp.~$\Sigma$) is an atomic metavariable for structures which does not occur in any $\Psi_i$ (resp.~$\Phi_i$) for any $0 \leq i \leq n$.

An {\em interpolation-safe} rule is a special rule such that each $\Psi_i$ and $\Phi_i$  for any $1 \leq i \leq n$ contain at most one occurrence of at most one metavariable for structures and possibly zeroary structural connectives.
\end{definition}

For instance, all analytic-inductive modal reduction principles can be equivalently captured by interpolation-safe rules. An example is the well-known axiom $\Diamond \Box p \vdash \Box \Diamond p$, which is equivalently captured by the following rule:

\begin{center}
    \AXC{$X \vdash \check{\Box}\check{\blacksquare} Y$}
    \UIC{$X \vdash \check{\blacksquare}\check{\Box} Y$}
    \DP
\end{center}
Another example of this type is the axiom $\Diamond \neg p \vdash \neg \Box p$ of modal fundamental logic (cf.~Example \ref{ex:1}), which is equivalently captured by the following interpolation-safe rule:
\begin{center}
    \AXC{$X \vdash \check{\neg}\hat{\blacklozenge} Y$}
    \UIC{$X \vdash \check{\blacksquare}\check{\neg} Y$}
    \DP
\end{center}

\begin{proposition}
    \label{lem:localmaeeasyr}
Every interpolation-safe rule preserves the local Maehara-Lyndon interpolation property, and the interpolant of the conclusion is built from the interpolants in the premises using $\aand$, $\aor$, or the logical counterpart of structural connectives occurring in the conclusion.
\end{proposition}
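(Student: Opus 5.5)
Fix an interpolation-safe rule of the first form (the second is order-dual), with conclusion instance $\Pi\vdash\Psi_0$ obtained by instantiating metavariables, and assume every premise instance $\Pi\vdash\Psi_i$ ($1\le i\le n$) has the Maehara--Lyndon property. The approach is to classify the substructures $X$ of the conclusion and produce $\gamma_X$ in each case. Since $\Pi$ occurs once and every metavariable of the rule occurs once in the conclusion, every substructure of the conclusion either lies inside the instantiation of a metavariable, or is a node of the metastructure $\Psi_0$ itself, i.e.\ $\Psi_0$ or a subterm $\Psi_0'$ of it built from structural connectives above instantiated metavariables. Substructures inside the instantiation of some metavariable, including $\Pi$ and its substructures, are handled by Lemma~\ref{lem:substructures of variables}: metavariables occur at most once in each premise (each $\Psi_i$ has at most one occurrence of one metavariable, and $\Pi$ does not occur in $\Psi_i$). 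That lemma gives $\gamma_0=\bigwedge$ (resp.\ $\bigvee$) of the premise interpolants, and the variable conditions follow since each premise context is contained, polarity-wise, in the conclusion context. This last point needs a remark: the premise context consists of $\Pi$ plus one occurrence of a metavariable $Y$ in $\Psi_i$, and analytic rules preserve the polarity of metavariables between premises and conclusion. I would invoke the soundness/ALBA correspondence (Proposition~\ref{prop:type5}) or simply the definition of analytic rules for this.

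The remaining case is $X$ a node of $\Psi_0$ not inside an instantiation. By the display property, it suffices to produce a Lyndon interpolant for a display-equivalent sequent, by Lemma~\ref{lem:displaycraigtomaehara}. I will therefore first treat $X=\Psi_0$, i.e.\ find $\delta$ with $\Pi\vdash\delta$ and $\delta\vdash\Psi_0$. For each premise $i$, take the Maehara interpolant $\delta_i$ of the substructure $\Pi$ in $\Pi\vdash\Psi_i$, so that $\Pi\vdash\delta_i$ and $\delta_i\vdash\Psi_i$. Set $\delta=\bigwedge_i\delta_i$. Then $\Pi\vdash\delta$ by $\wedge_R$, and $\delta\vdash\Psi_i$ by $\wedge_L$ for each $i$. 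Since $\Pi$ is a metavariable, applying the rule itself with $\Pi:=\delta$ yields $\delta\vdash\Psi_0$. The variables of $\delta$ lie in those of $\Pi$ and of the $\Psi_i$, which in turn lie in those of $\Psi_0$ with the same polarity. For $n=0$, $\delta=\top$.

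For a proper node $X$ of $\Psi_0$, I would display $X$. The display-equivalent sequent then has $X$ on one side and, on the other, a structure containing $\Pi$ together with the remaining parts of $\Psi_0$. Here the one-metavariable restriction on premises is what I expect to be the main obstacle. My plan is to use the Maehara interpolants of the premises for the substructures corresponding to the metavariables occurring under $X$. Each premise $\Psi_i$ contains at most one occurrence $Y$, so its interpolant $\gamma^i_Y$ satisfies $Y\vdash^{\varepsilon}\gamma^i_Y$ and the premise with $Y$ replaced by $\gamma^i_Y$. I then combine these with $\wedge/\vee$ per metavariable to get $\gamma_Y$ as in Lemma~\ref{lem:substructures of variables}. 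Next I apply the rule with each metavariable $Y$ below $X$ instantiated by $\gamma_Y$. Finally I take $\gamma_X$ to be $X$ with each $Y$ replaced by $\gamma_Y$ and structural connectives replaced by their logical counterparts. The sequent $X\vdash^{\varepsilon_X}\gamma_X$ follows by the introduction rules $f_R$/$g_L$ (as in the $f_R$ case of Theorem~\ref{thm:interpolation_LE}), together with $\top_R$/$\bot_L$-style axioms for zeroary connectives. The conclusion with $\gamma_X$ in place of $X$ follows from the rule instance, followed by $f_L$/$g_R$ after displaying. The variable conditions are inherited from the $\gamma_Y$, since connectives preserve polarity consistently with the order types.
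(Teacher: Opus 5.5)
Your proposal is correct and follows essentially the same route as the paper. Substructures lying inside metavariable instantiations are handled by Lemma~\ref{lem:substructures of variables}. For a node $U[X_1,\ldots,X_k]$ of $\Psi_0$ you merge the premise interpolants per metavariable with $\wedge/\vee$ (giving $\top^{\varepsilon}$ when a metavariable occurs in no premise), apply the rule to this instantiation, and take $u[\ldots]$ as the interpolant; the only cosmetic difference is that you treat $X=\Psi_0$ separately through the interpolant of $\Pi$, whereas the paper covers it uniformly with $U=\Psi_0$.
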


\begin{proof}
We show that if (the instantiations of) the premises of the rule have the Maehara interpolation property, then so does (the instantiation of) the conclusion. Moreover, the interpolant is built from the interpolants of the premises only using the logical counterparts of structural connectives occurring in the conclusion of the rule.
    
Let us assume a basic LE-logic in an arbitrary but fixed language $\mathcal{L}$. Below, let us consider an instance of the first interpolation-safe structural rule described in Definition \ref{def:InterpolationSafeRules} (the other case is analogous) in the language $\mathcal{L}$: 
\begin{center}
\begin{tabular}{c}
\AXC{$\Gamma \vdash \Delta_1$} 
\AXC{$\cdots$}
\AXC{$\Gamma \vdash \Delta_n$}
\TIC{$\Gamma \vdash \Delta_0$}
\DP
\\
\end{tabular}
\end{center} 
Let $Y$ be a substructure of the sequent $\Gamma \vdash \Delta_0$.  If $Y$ is a substructure of a metavariable $X_0$ in $W\vdash T_0$ then Lemma \ref{lem:substructures of variables} gives the desired result. The other possibility is that there exists a substructure $U[X_1,\ldots,X_k]$ of $\Pi\vdash \Psi_0$ whose instantiation is $Y$, where the $X_j$, $1\leq j\leq k$, are an exhaustive list of the metavariables $U$ contains. Let's denote  by $Y^j$ the substructure of $Y$  that instantiates $X_j$.  Notice, in particular, that $\Pi$ cannot be one of the $X_j$. This implies that, since the rule is interpolation safe, each premise $\Pi\vdash \Psi_i$ has at most one instance of one variable among $X_j$ that instantiates a substructure of $Y$. Let's denote this variable by $Z_i$, and the substructure of $Y$ it instantiates by $Y_i$. 

By assumption, for the instantiation of each premise, there is an interpolant $\sigma_i$, such that $Y^j\vdash^{\varepsilon_{X_j}}\sigma_i$ and $\Gamma\vdash\Delta_i\ [\sigma_i/Y_i]_k $, where $\varepsilon_{X_j}=1$ if $X_j$ is an $\mathcal{F}$-metavariable and $\varepsilon_{X_j}=\partial$ otherwise. Consider $I_j:=\{i\ \mid\ Z_i=X_j\}$, for $1\leq j\leq k$. Then it is routine to verify, using the introduction rules for $\land$ and $\lor$, that $Y_j\vdash^{\varepsilon_{X_j}}\bigwedge^{{\varepsilon_{X_j}}}_{i\in I_j}\sigma_i$ and $\Gamma\vdash\Delta_i\ [\bigwedge^{{\varepsilon_{X_j}}}_{i\in I_j}\sigma_i/Y_i]_k$. Then applying the rule we obtain  $\Gamma\vdash\Delta_0 [U[\bigwedge^{{\varepsilon_{X_1}}}_{i\in I_1}\sigma_i,\ldots,\bigwedge^{{\varepsilon_{X_k}}}_{i\in I_k}\sigma_i]/Y]$.  The formula $u[\bigwedge^{{\varepsilon_{X_1}}}_{i\in I_1}\sigma_i,\ldots,\bigwedge^{{\varepsilon_{X_k}}}_{i\in I_k}\sigma_i]$ is the interpolant. Since each $\sigma_i$ satisfies the conditions on the restrictions on variables so do these formulas. Notice in particular that if some $I_j=\varnothing$, then the construction above implies that $\bigwedge^{{\varepsilon_{X_j}}}_{i\in I_0}\sigma_i$ is $\top^{\varepsilon_{X_j}}$. The fact that added connectivess are only those that appear as structural connectives in the conclusion of the rule is immediate. This concludes the proof.
\end{proof}

\begin{theorem}
    Every analytic axiomatic extension of $\mathbf{L}_{\mathrm{LE}}$ whose corresponding rule is a special interpolation-safe rule has the Maehara interpolation property. 
\end{theorem}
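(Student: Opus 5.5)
The plan is to reduce the theorem to the local results already established, by induction on cut-free derivations in the extended calculus. Fix an analytic axiomatic extension $\mathbf{L}$ of $\mathbf{L}_{\mathrm{LE}}$. By Proposition \ref{prop:type5}, its axioms correspond to a set $\mathcal{R}$ of analytic structural rules, which by hypothesis are special and interpolation-safe. Let $\mathbf{D.LE}+\mathcal{R}$ be the display calculus obtained by adding these rules. I would first recall that $\mathbf{D.LE}+\mathcal{R}$ is still a proper display calculus, since analytic structural rules satisfy Belnap's conditions. Hence it enjoys cut elimination, and it is sound and complete for $\mathbf{L}$. Completeness comes via the conservativity of $\mathbf{L}^*_{\mathrm{LE}}$ over $\mathbf{L}_{\mathrm{LE}}$ and the equivalence between each axiom and its rule. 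So it suffices to show that every cut-free derivable sequent of $\mathbf{D.LE}+\mathcal{R}$ has the Maehara interpolation property, with interpolants derivable in $\mathbf{D.LE}+\mathcal{R}$.

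Next I would establish that every rule of $\mathbf{D.LE}+\mathcal{R}$ other than Cut preserves the local Maehara--Lyndon property. For the rules of $\mathbf{D.LE}$ this is exactly Theorem \ref{thm:interpolation_LE}. Its argument only ever appeals to the premises' interpolants and to rules of $\mathbf{D.LE}$, so it remains valid after the calculus is enlarged: derivability in the smaller calculus implies derivability in the larger one. For each rule in $\mathcal{R}$, preservation is Proposition \ref{lem:localmaeeasyr}. Its appeal to Lemma \ref{lem:substructures of variables} is legitimate because of how the rules are shaped. The lemma requires each metavariable to occur at most once in each premise. Interpolation-safety guarantees that each $\Psi_i$ (resp.~$\Phi_i$) contains at most one occurrence of one metavariable, and the displayed $\Pi$ (resp.~$\Sigma$) does not occur in $\Psi_i$ (resp.~$\Phi_i$), so the hypothesis holds.

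With both facts in place, I would argue by induction on the height of a cut-free derivation, exactly as in the Local interpolation corollary. Axioms $\mathrm{Id}_p$, $\top_R$ and $\bot_L$ are handled as before. In the inductive step, the last rule is either a $\mathbf{D.LE}$ rule or a rule in $\mathcal{R}$, and in both cases local preservation gives the interpolants for the conclusion. The variable and polarity conditions are inherited at each step. By Lemma \ref{lem:displaycraigtomaehara}, this yields Lyndon interpolants for all display-equivalent sequents. Specialising to sequents $\varphi\vdash\psi$ built from formulas then gives Maehara, and hence Lyndon, interpolation for $\mathbf{L}$.

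I expect the main obstacle to be the cut-elimination and completeness step: the whole induction is only meaningful for cut-free proofs, because Cut does not preserve the variable conditions. The first thing I would do there is invoke the general Belnap-style metatheorem for proper display calculi (\cite{ucaaptt,LEframes}). Analytic structural rules are closed under uniform substitution and have no principal formulas, so they cannot block cut reduction. A secondary subtle point is checking that the interpolants produced inside Proposition \ref{lem:localmaeeasyr}, such as $\top^{\varepsilon}$ when some $I_j$ is empty, still satisfy the polarity constraints. This follows from the fact that constants contain no variables.
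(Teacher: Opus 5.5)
You have a genuine gap: you never show that the interpolants are $\mathcal{L}$-formulas, and that verification is what the paper's proof consists of. Your induction over cut-free derivations of $\mathbf{D.LE}+\mathcal{R}$ is fine as far as it goes, using Theorem~\ref{thm:interpolation_LE} for the basic rules and Proposition~\ref{lem:localmaeeasyr} for the new ones. Your remarks on excluding Cut and on the hypothesis of Lemma~\ref{lem:substructures of variables} are more careful than the paper's.

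Definition~\ref{def:local_interpolant_ml} requires $\mathcal{L}$-formula interpolants, and Proposition~\ref{lem:localmaeeasyr} does not guarantee them. For a substructure $Y$ instantiating a non-variable part $U[X_1,\ldots,X_k]$ of the conclusion, it returns the formula $u[\ldots]$. That formula is built from the operational counterparts of the structural connectives of $U$, which range over $\mathcal{F}^*\cup\mathcal{G}^*$, not $\mathcal{F}\cup\mathcal{G}$. For instance, let $\mathcal{L}$ have only $\Diamond$ and $\Box$, and let $\mathcal{R}$ contain the rule with premise $X\vdash\check{\Box}\check{\blacksquare}Y$ and conclusion $X\vdash\check{\blacksquare}\check{\Box}Y$. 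The recipe then gives $\blacksquare\Box\sigma$ as the interpolant of $\check{\blacksquare}\check{\Box}\Delta$. This is not even a formula of the calculus, whose introduction rules cover only connectives in $\mathcal{F}\cup\mathcal{G}$. So at such a sequent your construction does not deliver the induction hypothesis. Display steps hand the interpolant of one structure to another, and Lemma~\ref{lem:substructures of variables} conjoins or disjoins premise interpolants. Nothing in your argument therefore stops such formulas from reaching the end-sequent $\varphi\vdash\psi$. Conservativity of $\mathbf{L}^*_{\mathrm{LE}}$ over $\mathbf{L}_{\mathrm{LE}}$ does not help, because it concerns derivability of $\mathcal{L}$-sequents, not the language of the interpolant.

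The paper supplies this step by a further induction on the derivation, in two parts:
\begin{itemize}
\item It uses that the conclusions of the rules corresponding to axioms of $\mathbf{L}_{\mathrm{LE}}$ contain only structural connectives whose operational counterparts lie in $\mathcal{L}$. By Proposition~\ref{lem:localmaeeasyr}, these rules then add only $\mathcal{L}$-connectives.
\item It inspects the cases of Theorem~\ref{thm:interpolation_LE}. Apart from $\wedge$, $\vee$ and the constants $\top,\bot$, a connective enters an interpolant only at an introduction rule. There it is the connective being introduced, hence in $\mathcal{L}$.
\end{itemize}
You need to add this argument, or restrict to signatures with $\mathcal{L}^*=\mathcal{L}$, as in the tense application, where the issue disappears.

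This is a substantive condition on how the rules are presented, not a formality. As the example shows, rearranging a rule by display postulates to put it in special form can place residual connectives in its conclusion. By contrast, cut elimination, which you flag as the main obstacle, is taken for granted in the paper and is not where the remaining work lies.
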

\begin{proof}
    In view of Theorem \ref{thm:interpolation_LE} and Lemma \ref{lem:localmaeeasyr} what is left to show is that the connectives of the interpolants are in the language of $\mathbf{L}_{\mathrm{LE}}$.
     Notice preliminarily that the conclusion of any analytic rule that corresponds to an axiom of  $\mathbf{L}_{\mathrm{LE}}$ contains structural connectives whose formula counterparts are part of $\mathbf{L}_{\mathrm{LE}}$.

    We proceed by induction on the size of the proof. By Lemma \ref{lem:localmaeeasyr} and the above observation, an application of a non-basic analytic rule is only adding to the interpolant connectives in $\mathbf{L}_{\mathrm{LE}}$. Inspecting the cases in Theorem \ref{thm:interpolation_LE}, reveals that connectives outside $\land$ and $\lor$ are added only when an introduction rule is applied, and the connective that is added is the one the rule introduces. This concludes the proof.
\end{proof}

We finish this section by showing that Lyndon interpolation property holds for fundamental logic and its basic modal expansions with unary {\em tense} connectives  \cite{Holliday2023,Holliday2024} (cf.~Example \ref{ex:1}). 
Indeed, if e.g.~the Galois-residuals of $\Box$ and $\Diamond$ in Example \ref{ex:1} are present in the language, then  $\mathcal{L}^\ast = \mathcal{L}$.
The rules corresponding to all the axioms of these logics are interpolation-safe except for  
 the one corresponding to the $\mathrm{LE}$-axiom $p\wedge \neg p \vdash \bot$:

\begin{center}
    \AXC{$ X \vdash \NEG X$}
    \LL{$R$}
    \UIC{$ X \vdash Y$}
    \DP
\end{center}

\begin{proposition} If   
     all  connectives in $\mathcal{L}$ are unary except conjunction and disjunction, then the local Maehara interpolation property holds for $R$. The interpolant is a formula in $\mathcal{L}^\star$. Furthermore, if $\Gamma\vdash\Delta [\Sigma]$ is an instance of the conclusion, and the display equivalent sequent $\Sigma\vdash^{\varepsilon_{\Sigma}}\Delta'$ contains only structural connectives from the language of $\mathcal{L}$, then the interpolant of $\Sigma$ is in the language $\mathcal{L}$.
\end{proposition}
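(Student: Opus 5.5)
Fix an instance of $R$ with premise $\Gamma\vdash\NEG\Gamma$ and conclusion $\Gamma\vdash\Delta$. The plan is to argue by cases on where a substructure $Y$ of the conclusion lies. If $Y$ is a substructure of $\Delta$ (and $Y\neq\Delta$ is the only nontrivial situation to treat), note that $\Delta$ does not occur in the premise at all. So the natural interpolant is $\bot^{\varepsilon_Y}$-like, exactly as in the weakening cases $\aatop_W,\abot_W$ of Theorem~\ref{thm:interpolation_LE}. We take $\gamma_Y:=\top$ if $\varepsilon_Y=1$ and $\bot$ otherwise, so $Y\vdash^{\varepsilon_Y}\gamma_Y$ holds by $\top_R$/$\bot_L$. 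Then $(\Gamma\vdash\Delta)[\gamma_Y/Y]$ is again an instance of $R$ applied to the same premise, since $Y$ sits in the metavariable instantiated in the conclusion only. The variable conditions are vacuous.

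The substantial case is when $Y$ is a substructure of $\Gamma$, possibly $\Gamma$ itself. Here $\Gamma$ occurs twice in the premise, once positively and once under $\NEG$, so Lemma~\ref{lem:substructures of variables} does not apply. By hypothesis, the premise has the Maehara property for each of the two occurrences of $Y$. For the occurrence inside the left $\Gamma$ this gives $\sigma_1$; for the occurrence inside $\NEG\Gamma$ it gives $\sigma_2$. We have $Y\vdash^{\varepsilon_Y}\sigma_1$ and $Y\vdash^{\varepsilon_Y}\sigma_2$, and both sides satisfy the corresponding Lyndon conditions. We then set $\sigma:=\sigma_1\wedge^{\varepsilon_Y}\sigma_2$, so $Y\vdash^{\varepsilon_Y}\sigma$.

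The difficulty is that we need $(\Gamma\vdash\Delta)[\sigma/Y]$. Via $R$, this requires $\Gamma[\sigma/Y]\vdash\NEG\Gamma[\sigma/Y]$, with both occurrences replaced simultaneously, whereas Maehara only gives single replacements. To get around this, we exploit unarity. Because all connectives other than $\wedge,\vee$ are unary, and structural connectives in $\Gamma$ are unary, the path from $\Gamma$ down to $Y$ is a chain of unary structural operations. Displaying along it, $\Gamma\vdash\NEG\Gamma$ is display equivalent to $Y\vdash^{\varepsilon_Y} O(\NEG\, O'(Y))$ for suitable unary contexts. From the single-replacement interpolants we get $\sigma_1\vdash^{\varepsilon_Y}O(\NEG O'(Y))$, i.e.\ $\Gamma[\sigma_1/Y]\vdash\NEG\Gamma$. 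Using cut-free monotonicity/antitonicity of the unary contexts, we then substitute $\sigma$ in both places, since $\sigma$ is below (or above, dually) each $\sigma_i$, so that $\Gamma[\sigma/Y]\vdash\NEG\Gamma[\sigma/Y]$ follows. Concretely, the cleanest route is the following. Convert the derivable $\Gamma\vdash\NEG\Gamma$ into the formula-level statement $\gamma'\wedge\neg\gamma'\vdash\bot$ up to polarity, where $\gamma'$ is the formula of $O'(Y)$ with $Y$ replaced by $\sigma$. This is derivable by the axiom $p\wedge\neg p\vdash\bot$ and substitution. The real content is that $R$ applied to $\Gamma[\sigma/Y]$ is just an instance of $R$, which needs no premise manipulation beyond checking that $\Gamma[\sigma/Y]\vdash\NEG\Gamma[\sigma/Y]$ is derivable. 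I would first try proving it directly as $\Gamma'\vdash\NEG\Gamma'$ with $\Gamma'$ any structure: if it fails in general, I fall back on the two-step replacement using $\sigma_1,\sigma_2$ above. This is the main obstacle: simultaneous replacement in a sequent with a repeated metavariable. Unary-ness is exactly what makes the display and monotonicity argument go through, since with binary structural connectives the two occurrences of $Y$ could not be displayed independently.

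Finally come the language claims. $\sigma$ is built from $\sigma_1,\sigma_2$ by $\wedge/\vee$, plus the formula counterparts of the unary structural connectives used in displaying, hence lies in $\mathcal{L}^\star$. If the display $\Sigma\vdash^{\varepsilon_\Sigma}\Delta'$ uses only $\mathcal{L}$-structural connectives, then the connectives introduced in the construction are counterparts of those, so the interpolant lies in $\mathcal{L}$. The Lyndon variable conditions are inherited from $\sigma_1,\sigma_2$, since the polarity of $Y$ in the premise and in the conclusion agree.
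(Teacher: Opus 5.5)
Your treatment of substructures of $\Delta$ matches the paper's. The main case, $Y$ a substructure of $\Gamma$, has a genuine gap. You cannot pass from $\Gamma[\sigma_1/Y]\vdash\check{\neg}\Gamma$ and $\Gamma\vdash\check{\neg}\Gamma[\sigma_2/Y]$ to $\Gamma[\sigma/Y]\vdash\check{\neg}\Gamma[\sigma/Y]$. Since $Y\vdash^{\varepsilon_Y}\sigma$, replacing the second occurrence of $Y$ by $\sigma$ goes in the wrong direction for monotonicity. The two single replacements also live in different sequents, so they cannot be merged.

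A concrete counterexample: take the instance with premise $p\wedge\neg p\vdash\check{\neg}(p\wedge\neg p)$ and conclusion $p\wedge\neg p\vdash q$, with $Y=\Gamma=p\wedge\neg p$. The premise is derivable via $\neg_L$, the Galois display rule for $\check{\neg}$, and $\wedge_L$. Then $\sigma_1=\sigma_2=p$ are legitimate Maehara--Lyndon interpolants for the two occurrences of $Y$ in the premise, because $p\wedge\neg p\vdash p$, $p\vdash\check{\neg}(p\wedge\neg p)$ and $p\wedge\neg p\vdash\check{\neg}p$ are all derivable. Yet with $\sigma=p\wedge p$, neither $p\wedge p\vdash\check{\neg}(p\wedge p)$ nor $p\wedge p\vdash q$ is derivable.

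Your ``cleanest route'' does not repair this. The sequent $\gamma'\wedge\neg\gamma'\vdash\bot$ is always derivable, but it is not the premise $\gamma'\vdash\check{\neg}\gamma'$ of $R$ (take $\gamma'=p$). The variable conditions also fail to be inherited. The premise context of $Y$ contains a second copy of $\Gamma$, which the conclusion context lacks. In the example the conclusion context is $[-]\vdash q$, so any interpolant for $Y$ must be variable-free, whereas $\sigma_1$ and $\sigma_2$ contain $p$.

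The missing idea is to use only the derivability of the premise, not its interpolants:
\begin{enumerate}
\item Instantiating the succedent metavariable of $R$ with $\bot$ gives $\Gamma\vdash\bot$.
\item Displaying $Y$ yields $Y\vdash^{\varepsilon_Y}\Pi'[\bot]$. Because every structural connective is unary, $\Pi'[\bot]$ is just a chain of structural connectives applied to $\bot$, so its formula translation $\pi'[\bot]$ contains no variables. This is where unarity is really used, not to display two occurrences independently.
\item $Y\vdash^{\varepsilon_Y}\pi'[\bot]$ then follows by display and the invertible rules $f_L$, $g_R$.
\item For every $\mathcal{G}$-structure $Z$, $\pi'[\bot]\vdash^{\varepsilon_Y}\Pi'[Z]$ holds by $\bot_L$ and the rules $f_R$, $g_L$. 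Displaying back gives $\Gamma[\pi'[\bot]/Y]\vdash Z$, and in particular $\Gamma[\pi'[\bot]/Y]\vdash\Delta$.
\end{enumerate}
The Lyndon conditions are then vacuous. The claims about $\mathcal{L}^\star$ versus $\mathcal{L}$ follow because the connectives of $\pi'[\bot]$ are exactly the counterparts of the structural connectives in the display context $\Pi'$.
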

\begin{proof}
    Let $\Gamma\vdash\Delta$ be an instance of the conclusion of $R$, in which case the assumption is $\Gamma\vdash \NEG\Gamma$. If $\Sigma$ is a substructure of $\Delta$, then clearly $\top^{\varepsilon(\Sigma)}$ is the interpolant, since $\Sigma\vdash^{\varepsilon(\Sigma)}\top^{\varepsilon(\Sigma)}$ is derivable. If  $\Gamma=\Pi[\Sigma]$, we can apply $R$ again and obtain $\Pi[\Sigma]\vdash\bot$. Then  $\Sigma\vdash^{\varepsilon_{\Sigma}} \Pi'[\bot]$ and so $\Sigma\vdash^{\varepsilon_{\Sigma}} \pi'[\bot]$. Since $\mathbf{L}$ contains only unary connectives $\pi'[\bot]$ contains no variables. Clearly $\pi'[\bot]\vdash^{\varepsilon_{\Sigma}}\Pi'[X]$, and so $\Pi[\pi'[\bot]]\vdash X$, for every structure $X$. In particular, $\Pi[\pi'[\bot]]\vdash\Delta$. Hence  $\pi'[\bot]$ is the interpolant. Indeed, if $\Pi'$ contains only structural connectives from $\mathbf{L}$ then so does the interpolant.
\end{proof}
The following result is a consequence of the proposition above and of Proposition \ref{lem:localmaeeasyr}  and Theorem \ref{thm:interpolation_LE}.
\begin{corollary}
    Lyndon interpolation property holds for fundamental logic and its basic modal expansions with tense connectives.
\end{corollary}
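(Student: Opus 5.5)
The corollary follows by assembling the local results along a cut-free derivation. First I fix the calculus. Let $\mathbf{D.F}$ be $\mathbf{D.LE}$ in the signature of Example~\ref{ex:1}, with the residuals $\blacklozenge,\blacksquare$ included so that $\mathcal{L}^\ast=\mathcal{L}$. To it I add the analytic structural rules that Proposition~\ref{prop:type5} associates with the axioms of fundamental logic and of the tense expansion under consideration. Concretely these are: the rule $R$ for $p\wedge\neg p\vdash\bot$; the rule for $\Diamond\neg p\vdash\neg\Box p$ displayed before Proposition~\ref{lem:localmaeeasyr}; and a rule for the contraposition principle of $\neg$. The last one becomes a display postulate, since $\neg$ is its own Galois residual, so it is covered by the display case of Theorem~\ref{thm:interpolation_LE}.

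Cut elimination for the extension holds because the added rules are analytic, so the calculus is a proper display calculus. Hence every derivable $\mathcal{L}$-sequent $\varphi\vdash\psi$ has a cut-free derivation, and I reduce the claim to showing that every rule occurring in such a derivation preserves the local Maehara--Lyndon property. The rules fall into three groups:
\begin{itemize}
\item the basic rules, handled by Theorem~\ref{thm:interpolation_LE};
\item the interpolation-safe rules, handled by Proposition~\ref{lem:localmaeeasyr};
\item the single non-safe rule $R$, handled by the proposition immediately above.
\end{itemize}
That proposition applies because every connective other than $\wedge,\vee$ here is unary. I then argue by induction on the height of the cut-free derivation, exactly as in the Local interpolation corollary. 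This shows that every derivable sequent has the Maehara property, with interpolants computed effectively from those of the premises.

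Next I specialize to a sequent $\varphi\vdash\psi$ with no structural connectives and take $X=\varphi$. Lemma~\ref{lem:displaycraigtomaehara} then gives a Lyndon interpolant $\gamma$ with $\varphi\vdash\gamma$ and $\gamma\vdash\psi$ derivable. It also gives the variable and polarity conditions, because $\mathrm{Var}^\pm$ of the context $(\varphi\vdash[-])$ is just $\mathrm{Var}^\pm(\psi)$, suitably dualized.

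Two points need care. The first is that $\gamma$ must be a formula of the original language. Since $\mathcal{L}^\ast=\mathcal{L}$ in the tense case, all structural connectives have operational counterparts in $\mathcal{L}$, and this is automatic. For fundamental logic without the tense residuals, I would invoke the last clause of the preceding proposition and the argument of the theorem on special interpolation-safe rules. The point is that connectives enter interpolants only as introduced connectives or as counterparts of structural connectives in rule conclusions. I expect this to be the main obstacle: the conclusions of the $\Diamond\neg$-rule contain $\check{\blacksquare}$, so I would need to check that $\blacksquare$ never survives into the final interpolant. My plan is to appeal to conservativity of $\mathbf{L}^\ast_{\mathrm{LE}}$ over $\mathbf{L}_{\mathrm{LE}}$ (\cite[Theorem 2.4]{ChnGrePalTzi21}). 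Failing that, I would restrict the corollary as stated to the tense expansion, where the issue does not arise.

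The second point is that the interpolant $\pi'[\bot]$ produced for $R$ is variable-free, so it trivially satisfies the Lyndon polarity conditions.
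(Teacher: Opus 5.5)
Your proposal is correct and follows the paper's argument, which obtains the corollary directly from Theorem~\ref{thm:interpolation_LE}, Proposition~\ref{lem:localmaeeasyr} and the proposition for the rule $R$; your explicit use of cut elimination and of the Maehara-to-Lyndon step only spells out what the paper leaves implicit. The one weak spot is your conservativity fallback: it would not put the interpolant into $\mathcal{L}$, since conservativity concerns derivability of $\mathcal{L}$-sequents and $\varphi\vdash\gamma$ with $\gamma\notin\mathcal{L}$ is not one. You do not need it, though, because the corollary concerns Holliday's non-modal fundamental logic and the tense modal expansion, and in both cases $\mathcal{L}^\ast=\mathcal{L}$ (in the non-modal case because $\neg$ is its own Galois residual).
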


\section{Conclusions and further directions}
\label{sec:conclusions-interpolation}

In the present paper, we proved that the Lyndon interpolation property holds for the basic LE-logic over any LE-signature. Our proof is constructive, in the sense that interpolants can be extracted algorithmically from derivations in the corresponding display calculus. Moreover, the method is modular and extendable: we generalized the result to axiomatic extensions of LE-logics captured by augmenting the basic display calculus with suitable analytic structural rules. In particular, we identified a class of \emph{interpolation-safe rules} preserving the local Maehara-Lyndon interpolation property. To illustrate the applicability and modularity of the framework, we established Lyndon interpolation for  fundamental logic and its tense modal expansion.

A natural direction for further research concerns the extension of the present framework to \emph{inception display calculi}, recently introduced in Chapter~7 of \cite{unified-correspondence-proof-theoretically} and further developed in \cite{Inception}. As discusseed earlier on (cf.~Proposition \ref{prop:type5}), the expressivity of proper display calculi has been characterized in \cite{ucaaptt} in terms of a proper subclass of the class of {\em inductive axioms}, i.e.~the largest class of axioms to which the techniques of algorithmic correspondence theory \cite{conradie2012algorithmic, conradie2019algorithmic,CoGhPa14} can be uniformly applied.  Inception calculi generalize standard display calculi by allowing derivational nesting through higher-level assumptions, thereby providing analytic proof systems for logics axiomatized by any inductive axioms. 
Extending the present interpolation method to this setting would require a suitable generalization of the local interpolation property from ordinary analytic structural rules to inception rules. Such an extension would significantly broaden the class of axiomatic extensions for which constructive interpolation methods are available.

More generally, one of the main long-term goals of this line of research is the identification of sufficiently general syntactic conditions on analytic structural rules (and, correspondingly, on the analytic axioms they capture) guaranteeing the validity of various interpolation properties. Ideally, one would aim at a proof-theoretic interpolation meta-theorem for display calculi analogous to Belnap's celebrated meta-theorem on cut elimination. From this perspective, the class of interpolation-safe rules introduced in the present paper can be regarded as a first step toward a broader structural theory of interpolation for modular proof systems.

Towards this end, taking stock of the proof-theoretic approach developed in Chapter~6 of \cite{unified-correspondence-proof-theoretically}, which is based on the methodology introduced in \cite{Gore11}, it would be interesting to reformulate interpolation properties in terms of display-equivalent sequents. We expect that this perspective could lead to a finer-grained proof of Theorem~3.9 in the present paper.

\bibliography{ref}

\begin{thebibliography}{19}
\expandafter\ifx\csname natexlab\endcsname\relax\def\natexlab#1{#1}\fi
\providecommand{\url}[1]{\texttt{#1}}
\providecommand{\href}[2]{#2}
\providecommand{\path}[1]{#1}
\providecommand{\DOIprefix}{doi:}
\providecommand{\ArXivprefix}{arXiv:}
\providecommand{\URLprefix}{URL: }
\providecommand{\Pubmedprefix}{pmid:}
\providecommand{\doi}[1]{\href{http://dx.doi.org/#1}{\path{#1}}}
\providecommand{\Pubmed}[1]{\href{pmid:#1}{\path{#1}}}
\providecommand{\bibinfo}[2]{#2}
\ifx\xfnm\relax \def\xfnm[#1]{\unskip,\space#1}\fi
\bibitem[{D'Agostino(2008)}]{DAgostino2008}
\bibinfo{author}{G.~D'Agostino},
\newblock \bibinfo{title}{Interpolation in non-classical logics},
\newblock \bibinfo{journal}{Synthese} \bibinfo{volume}{164} (\bibinfo{year}{2008}) \bibinfo{pages}{421--435}. \URLprefix \url{http://www.jstor.org/stable/40271081}.
\bibitem[{Fussner(2026)}]{fussner2025interpolationnonclassicallogics}
\bibinfo{author}{W.~Fussner},
\newblock \bibinfo{title}{{Interpolation in Non-Classical Logics}},
\newblock in: \bibinfo{editor}{B.~ten Cate}, \bibinfo{editor}{J.~C. Jung}, \bibinfo{editor}{P.~Koopmann}, \bibinfo{editor}{C.~Wernhard}, \bibinfo{editor}{F.~Wolter} (Eds.), \bibinfo{booktitle}{{Theory and Applications of Craig Interpolation}}, \bibinfo{publisher}{Ubiquity Press}, \bibinfo{year}{2026}.
\bibitem[{van~der Giessen et~al.(2026)van~der Giessen, Jalali, and Kuznets}]{GJK2025interpolationprooftheory}
\bibinfo{author}{I.~van~der Giessen}, \bibinfo{author}{R.~Jalali}, \bibinfo{author}{R.~Kuznets},
\newblock \bibinfo{title}{{Interpolation in Proof Theory}},
\newblock in: \bibinfo{editor}{B.~ten Cate}, \bibinfo{editor}{J.~C. Jung}, \bibinfo{editor}{P.~Koopmann}, \bibinfo{editor}{C.~Wernhard}, \bibinfo{editor}{F.~Wolter} (Eds.), \bibinfo{booktitle}{{Theory and Applications of Craig Interpolation}}, \bibinfo{publisher}{Ubiquity Press}, \bibinfo{year}{2026}.
\bibitem[{Lyon et~al.(2020)Lyon, Tiu, Gor\'{e}, and Clouston}]{Lyon2020}
\bibinfo{author}{T.~Lyon}, \bibinfo{author}{A.~Tiu}, \bibinfo{author}{R.~Gor\'{e}}, \bibinfo{author}{R.~Clouston},
\newblock \bibinfo{title}{Syntactic interpolation for tense logics and bi-intuitionistic logic via nested sequents},
\newblock in: \bibinfo{editor}{M.~Fernandez}, \bibinfo{editor}{A.~Muscholl} (Eds.), \bibinfo{booktitle}{28th EACSL Annual Conference on Computer Science Logic (CSL 2020)}, \bibinfo{publisher}{Leibniz International Proceedings in Informatics}, \bibinfo{year}{2020}, pp. \bibinfo{pages}{1--16}.
\bibitem[{Kuznets(2016)}]{Kuznets16}
\bibinfo{author}{R.~Kuznets},
\newblock \bibinfo{title}{Proving {Craig} and {Lyndon} interpolation using labelled sequent calculi},
\newblock in: \bibinfo{editor}{L.~Michael}, \bibinfo{editor}{A.~Kakas} (Eds.), \bibinfo{booktitle}{Logics in Artificial Intelligence}, \bibinfo{publisher}{Springer International Publishing}, \bibinfo{address}{Cham}, \bibinfo{year}{2016}, pp. \bibinfo{pages}{320--335}.
\bibitem[{Kuznets(2015)}]{kuznets2015interpolation}
\bibinfo{author}{R.~Kuznets},
\newblock \bibinfo{title}{Interpolation method for multicomponent sequent calculi},
\newblock in: \bibinfo{booktitle}{International Symposium on Logical Foundations of Computer Science}, \bibinfo{organization}{Springer}, \bibinfo{year}{2015}, pp. \bibinfo{pages}{202--218}.
\bibitem[{Conradie and Palmigiano(2019)}]{conradie2019algorithmic}
\bibinfo{author}{W.~Conradie}, \bibinfo{author}{A.~Palmigiano},
\newblock \bibinfo{title}{Algorithmic correspondence and canonicity for non-distributive logics},
\newblock \bibinfo{journal}{Annals of Pure and Applied Logic} \bibinfo{volume}{170} (\bibinfo{year}{2019}) \bibinfo{pages}{923--974}.
\bibitem[{Conradie et~al.(2020)Conradie, Palmigiano, Robinson, and Wijnberg}]{conradie2020non}
\bibinfo{author}{W.~Conradie}, \bibinfo{author}{A.~Palmigiano}, \bibinfo{author}{C.~Robinson}, \bibinfo{author}{N.~M. Wijnberg},
\newblock \bibinfo{title}{Non-distributive logics: from semantics to meaning},
\newblock in: \bibinfo{editor}{A.~Rezus} (Ed.), \bibinfo{booktitle}{Contemporary Logic and Computing}, volume~\bibinfo{volume}{1} of \textit{\bibinfo{series}{Landscapes in Logic}}, \bibinfo{publisher}{College Publications}, \bibinfo{year}{2020}, pp. \bibinfo{pages}{38--86}.
\bibitem[{Greco et~al.(2024)Greco, Jipsen, Liang, Palmigiano, and Tzimoulis}]{LEframes}
\bibinfo{author}{G.~Greco}, \bibinfo{author}{P.~Jipsen}, \bibinfo{author}{F.~Liang}, \bibinfo{author}{A.~Palmigiano}, \bibinfo{author}{A.~Tzimoulis},
\newblock \bibinfo{title}{Algebraic proof theory for {LE}-logics},
\newblock \bibinfo{journal}{ACM Trans. Comput. Logic} \bibinfo{volume}{25} (\bibinfo{year}{2024}). \URLprefix \url{https://doi.org/10.1145/3632526}. \DOIprefix\doi{10.1145/3632526}.
\bibitem[{Almeida et~al.(2026)Almeida, Bezhanishvili, and Lemal}]{almeida2026superamalgamationmodallatticesnondistributive}
\bibinfo{author}{R.~N. Almeida}, \bibinfo{author}{N.~Bezhanishvili}, \bibinfo{author}{S.~Lemal}, \bibinfo{title}{Superamalgamation for modal lattices via non-distributive dualities}, \bibinfo{year}{2026}. \URLprefix \url{https://arxiv.org/abs/2602.20380}. \href{http://arxiv.org/abs/2602.20380}{{\tt arXiv:2602.20380}}.
\bibitem[{Holliday(2023)}]{Holliday2023}
\bibinfo{author}{W.~Holliday},
\newblock \bibinfo{title}{A fundamental non-classical logic},
\newblock \bibinfo{journal}{Logics} \bibinfo{volume}{1} (\bibinfo{year}{2023}) \bibinfo{pages}{36--79}.
\bibitem[{Holliday(2024)}]{Holliday2024}
\bibinfo{author}{W.~H. Holliday},
\newblock \bibinfo{title}{Modal logic, fundamentally},
\newblock in: \bibinfo{editor}{A.~Ciabattoni}, \bibinfo{editor}{D.~Gabelaia}, \bibinfo{editor}{I.~Sedl\'{a}r} (Eds.), \bibinfo{booktitle}{Advances in Modal Logic, Vol. 15}, \bibinfo{publisher}{College Publications}, \bibinfo{year}{2024}.
\bibitem[{Greco et~al.(2018)Greco, Ma, Palmigiano, Tzimoulis, and Zhao}]{ucaaptt}
\bibinfo{author}{G.~Greco}, \bibinfo{author}{M.~Ma}, \bibinfo{author}{A.~Palmigiano}, \bibinfo{author}{A.~Tzimoulis}, \bibinfo{author}{Z.~Zhao},
\newblock \bibinfo{title}{Unified correspondence as a proof-theoretic tool},
\newblock \bibinfo{journal}{Journal of Logic and Computation} \bibinfo{volume}{28} (\bibinfo{year}{2018}) \bibinfo{pages}{1367--1442}.
\bibitem[{Chen et~al.(2022)Chen, Greco, Palmigiano, and Tzimoulis}]{ChnGrePalTzi21}
\bibinfo{author}{J.~Chen}, \bibinfo{author}{G.~Greco}, \bibinfo{author}{A.~Palmigiano}, \bibinfo{author}{A.~Tzimoulis},
\newblock \bibinfo{title}{Syntactic completeness of proper display calculi},
\newblock \bibinfo{journal}{ACM Transactions on Computational Logic} \bibinfo{volume}{23:4} (\bibinfo{year}{2022}) \bibinfo{pages}{1--46}.
\bibitem[{{De Domenico}(2025)}]{unified-correspondence-proof-theoretically}
\bibinfo{author}{A.~{De Domenico}}, \bibinfo{title}{Unified correspondence, proof-theoretically}, \bibinfo{type}{Phd-thesis - research and graduation internal}, Vrije Universiteit Amsterdam, \bibinfo{year}{2025}. \DOIprefix\doi{10.5463/thesis.1412}.
\bibitem[{De~Domenico et~al.(2026)De~Domenico, Greco, and Palmigiano}]{Inception}
\bibinfo{author}{A.~De~Domenico}, \bibinfo{author}{G.~Greco}, \bibinfo{author}{A.~Palmigiano}, \bibinfo{title}{Inception display calculi}, \bibinfo{year}{2026}. \bibinfo{note}{To appear in the Special Issue of \emph{Studia Logica} in memory of Nuel Belnap (1930--2024), edited by Thomas M{\"u}ller, Tomasz Placek, and Heinrich Wansing}.
\bibitem[{Conradie and Palmigiano(2012)}]{conradie2012algorithmic}
\bibinfo{author}{W.~Conradie}, \bibinfo{author}{A.~Palmigiano},
\newblock \bibinfo{title}{Algorithmic correspondence and canonicity for distributive modal logic},
\newblock \bibinfo{journal}{Annals of Pure and Applied Logic} \bibinfo{volume}{163} (\bibinfo{year}{2012}) \bibinfo{pages}{338--376}.
\bibitem[{Conradie et~al.(2014)Conradie, Ghilardi, and Palmigiano}]{CoGhPa14}
\bibinfo{author}{W.~Conradie}, \bibinfo{author}{S.~Ghilardi}, \bibinfo{author}{A.~Palmigiano},
\newblock \bibinfo{title}{{Unified Correspondence}},
\newblock in: \bibinfo{editor}{A.~Baltag}, \bibinfo{editor}{A.~Smets} (Eds.), \bibinfo{booktitle}{{J}ohan van {B}enthem on Logic and Information Dynamics}, volume~\bibinfo{volume}{5} of \textit{\bibinfo{series}{Outstanding Contributions to Logic}}, \bibinfo{publisher}{Springer International Publishing}, \bibinfo{year}{2014}, pp. \bibinfo{pages}{933--975}.
\bibitem[{Brotherston and Gor{\'e}(2011)}]{Gore11}
\bibinfo{author}{J.~Brotherston}, \bibinfo{author}{R.~Gor{\'e}},
\newblock \bibinfo{title}{{Craig Interpolation in Displayable Logics}},
\newblock in: \bibinfo{editor}{K.~Br{\"u}nnler}, \bibinfo{editor}{G.~Metcalfe} (Eds.), \bibinfo{booktitle}{Automated Reasoning with Analytic Tableaux and Related Methods}, \bibinfo{publisher}{Springer Berlin Heidelberg}, \bibinfo{address}{Berlin, Heidelberg}, \bibinfo{year}{2011}, pp. \bibinfo{pages}{88--103}.

\end{thebibliography}

\appendix

\section*{Declaration on Generative AI}
  The author(s) have not employed any Generative AI tools.
  \newline
\end{document}